\newcommand{\C}{{\mathbb C}}
\newcommand{\R}{{\mathbb R}}
\newcommand{\Z}{{\mathbb Z}}
\newcommand{\PP}{{\mathbb P}}
\newcommand{\mM}{{\mathcal M}}
\newcommand{\jJ}{{\mathcal J}}
\newcommand{\oO}{{\mathcal O}}
\newcommand{\op}{\operatorname}
\DeclareMathOperator{\eval}{ev}
\DeclareMathOperator{\sign}{sign}
\DeclareMathOperator{\virdim}{vir-dim}
\newtheorem{thm}{Theorem}[section]
\newtheorem{lemma}[thm]{Lemma}
\newtheorem{cor}[thm]{Corollary}
\newtheorem{prop}[thm]{Proposition}
\theoremstyle{definition}
\begin{document}

\author[P.\ Ghiggini]{Paolo Ghiggini}

\address[P.\ Ghiggini]{
  Laboratoire de Mathématiques Jean Leray \\
CNRS and Université de Nantes \\
  BP 92208 \\
  2, Rue de la Houssinière \\
  F-44322 Nantes Cedex 03 \\
  FRANCE}

\email{paolo.ghiggini@univ-nantes.fr}

\author[K.\ Niederkrüger]{Klaus Niederkrüger-Eid}

\address[K.\ Niederkrüger]{
  Institut Camille Jordan\\
  Université Claude Bernard Lyon~1\\
  43 boulevard du 11 novembre 1918\\
  F-69622 Villeurbanne Cedex \\
  FRANCE}

\email{niederkruger@math.univ-lyon1.fr}

\title{On the symplectic fillings of standard real projective spaces}

\begin{abstract}
  We prove, in a geometric way, that the standard contact structure on
  $\R\PP^{2n-1}$ is not Liouville fillable for $n \ge 3$ and odd.  We
  also prove for all $n$ that semipositive fillings of such contact
  structures are always simply connected.  Finally we give yet
  another proof of the Eliashberg--Floer--McDuff theorem on the
  diffeomorphism type of the symplectically aspherical fillings of the
  standard contact structure on $S^{2n-1}$.
\end{abstract}

\thanks{The first author was partially supported by the ANR grant
  ANR-16-CE40-017 ``Quantact'', the Simons Foundation and the Centre
  de Recherches Mathématiques, through the Simons-CRM
  scholar-in-residence program, and the grant KAW 2019.0531 from the
  Knut and Alice Wallenberg Foundation.  The second author was
  partially supported by the ANR grant ANR-16-CE40-017 ``Quantact''.
  The first author is grateful to the Mittag-Leffler Institute for its
  hospitality during the program ``Knots, Strings, Symplectic Geometry
  and Dualities'' and to the department of Mathematics of the
  University of Uppsala for hospitality in the Spring semester of
  2021.
  The authors would also like to thank the referee for having
  carefully read the article.  }

\maketitle

\section{Introduction}

The standard contact structure $\xi$ on $S^{2n-1}$ is described in coordinates by the 
equation 
$$\xi = \ker \sum_{j=1}^n (x_j\,dy_j- y_j\,dx_j) .$$
Geometrically, $\xi_p$ is the unique complex hyperplane in $T_pS^{2n-1}$ for every $p \in 
S^{2n-1}$.  
The antipodal involution of $S^{2n-1}$ preserves $\xi$, and therefore induces a 
contact structure on $\R \PP^{2n-1}$ which we still denote by $\xi$. The disc bundle of the 
line bundle $\oO_{\PP^{n-1}}(-2)$ on $\C\PP^{n-1}$ is a strong symplectic filling of 
$(\R\PP^{2n-1}, \xi)$. On the other hand, $\R\PP^{2n-1}$ cannot be the boundary of a 
$2n$-dimensional manifold with the homotopy type of an $n$-dimensional CW complex if $2n-1\ge 5$; see
\cite[Section 6.2]{ekp}. This implies that a real projective space of dimension at least $5$ does not admit any  
Weinstein fillable contact structure. Our main result is the following.

\begin{thm}\label{nonfillability}
  The standard contact structure on $\R\PP^{2n-1}$ admits no
  symplectically aspherical fillings for $n > 1$ and odd.  In particular, it
  is not Liouville fillable.
\end{thm}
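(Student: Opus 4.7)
The plan is to suppose, toward contradiction, that $(W, \omega)$ is a symplectically aspherical filling of $(\R\PP^{2n-1}, \xi)$ with $n > 1$ odd, and to reduce to the already-understood case of the standard sphere.

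I would first consider the map $\pi_1(\R\PP^{2n-1}) = \Z/2 \to \pi_1(W)$ induced by inclusion. If it is injective, the corresponding index-two cover $\widetilde{W} \to W$ restricts on the boundary to the universal covering $S^{2n-1} \to \R\PP^{2n-1}$; the pulled-back form makes $(\widetilde{W}, \widetilde{\omega})$ into a symplectically aspherical filling of $(S^{2n-1}, \xi_{\mathrm{std}})$. The Eliashberg--Floer--McDuff theorem (which this paper reproves later) identifies $\widetilde{W}$ with the closed ball $B^{2n}$ up to diffeomorphism. The deck transformation is then a free smooth involution on $B^{2n}$, which is forbidden by Smith theory: every involution on a finite-dimensional contractible CW complex has a fixed point.

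The main obstacle is the case when the generator of $\pi_1(\R\PP^{2n-1})$ is null-homotopic in $W$---this is in fact the generic situation once the paper's companion simple-connectedness result for semipositive fillings is invoked. Here the hypothesis that $n$ is odd must enter essentially. The key topological input I would use is that $c_1(\xi) \in H^2(\R\PP^{2n-1}; \Z) \cong \Z/2$ equals $n \bmod 2$: indeed $\xi = \pi^* T\C\PP^{n-1}$ as complex bundles, so $c_1(\xi) = n \cdot \pi^* h$ for $\pi \colon \R\PP^{2n-1} \to \C\PP^{n-1}$ the natural projection and $h$ the hyperplane class, and the Gysin sequence of the circle bundle $\R\PP^{2n-1} \to \C\PP^{n-1}$ with Euler class $2h$ shows that $\pi^* h$ generates $H^2(\R\PP^{2n-1}; \Z) \cong \Z/2$; thus $c_1(\xi)$ is non-zero precisely for $n$ odd.

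I would then exploit this Chern class non-vanishing against the hypothetical null-homotopy via a pseudo-holomorphic curve argument. A plausible route is to cap $W$ with the concave disc bundle of $\oO_{\C\PP^{n-1}}(+2)$ along $\R\PP^{2n-1}$, forming a closed symplectic manifold $X$ (modelled on the $\C\PP^1$-bundle $\PP(\oO \oplus \oO(-2))$ over $\C\PP^{n-1}$ in the case of the standard filling), and to study $J$-holomorphic spheres in the fiber class of $X$: the odd parity of $n$ should force such a sphere to carry a non-trivial component in $W$ of positive $\omega$-area, contradicting asphericity. This geometric argument in the $\pi_1$-killed case is the technical heart of the proof and the step I expect to be the hardest.
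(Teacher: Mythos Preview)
Your case split is sound, but the content lives almost entirely in Case~2, and there the proposal stops short of an argument.

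For Case~1: injectivity of $\iota_* \colon \pi_1(\R\PP^{2n-1}) \to \pi_1(W)$ alone does not produce an index-two cover of $W$ restricting to $S^{2n-1} \to \R\PP^{2n-1}$ on the boundary. What you need is a surjection $\pi_1(W) \to \Z/2$ that is nontrivial on the image of the generator, and a $\Z/2$ \emph{subgroup} of $\pi_1(W)$ does not supply this. You can repair the gap by invoking Lemma~\ref{pi1 surjective} (a symplectically aspherical filling is semipositive, so $\iota_*$ is surjective), which forces $\pi_1(W)\cong \Z/2$ in this case---but then Theorem~\ref{simply connected} already rules the case out entirely. So Case~1 is vacuous, and the Smith-theory detour, while correct once the cover exists, does no work.

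For Case~2, the substantive case: your computation of $c_1(\xi) \equiv n \pmod 2$ is correct and is morally the same parity obstruction the paper exploits (via $\langle c_1(T\overline{W}), [\ell]\rangle = n+2$). But the proposed mechanism---spheres in a ``fiber class'' of $\overline{W}$---is not well-posed. The closed manifold $\overline{W}$ is a $\C\PP^1$-bundle over $\C\PP^{n-1}$ only for the \emph{standard} filling; for an arbitrary filling there is no fibration, no canonical fiber class, and no a~priori class $F$ with $[F]\cdot[W_\infty]=1$. The paper instead works with the class $[\ell]$ of a line in $W_\infty$, for which holomorphic representatives always exist. The steps you are missing are: (i) the moduli space $\mM(p_0,H_\infty)$ of such spheres through a point and a hyperplane in $W_\infty$ is forced to be noncompact (Lemma~\ref{noncompact}); (ii) under asphericity every nodal limit has exactly two components, each meeting $W_\infty$ once (Lemma~\ref{bubbles are nice}); and (iii) a degree/intersection argument on the compactified moduli space (Lemmas~\ref{Sn is trivial} through~\ref{ultimissimo}) shows that for some such nodal curve $[\ell] = 2A$ modulo torsion. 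Pairing with $c_1$ then gives $n+2 \in 2\Z$, the desired contradiction. Step~(iii) is the genuinely delicate part, and it is precisely where your sketch has no content; the parity of $n$ enters only at the very last line, after the holomorphic-curve analysis has already produced the divisibility-by-two statement.
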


These are the first examples of strongly but not Liouville fillable contact structures in high 
dimension. Examples in dimension three were given by the first author in 
\cite{ghiggini:nonstein} using Heegaard Floer homology. In contrast with the high dimensional
situation, the standard contact structure on $\R\PP^3$ is the canonical contact structure on 
the unit cotangent bundle of $S^2$ and therefore is Weinstein fillable.

After a preliminary version of our result (originally for $\R \PP^5$ only) was announced, Zhou 
proved in \cite{zhou:RPn} that $(\R\PP^{2n-1}, \xi)$ is not Liouville fillable if $n \ne 2^k$. He 
also proves similar nonfillability results for some other links of cyclic quotient singularities. 
Zhou's proof uses advanced properties of symplectic cohomology; in contrast our proof is more
 direct, as it relies on the analysis of how a certain moduli space of holomorphic spheres can break, in 
the spirit of McDuff's classification of symplectic fillings of $\R\PP^3$ in 
\cite{mcduff:rationalruled}. 

The strategy is the following. The standard contact structure $\xi$ on $\R \PP^{2n-1}$ admits 
a contact form whose Reeb orbits are the fibres of the Hopf fibration $\R \PP^{2n-1} \to 
\C\PP^{n-1}$. If $(W, \omega)$ is a strong symplectic filling of $(\R\PP^{2n-1}, \xi)$, by a 
symplectic reduction of $\partial W$ (informally speaking, by replacing $\partial W$
 with its quotient by the Reeb flow) we obtain a closed symplectic manifold $(\overline{W}, 
\overline{\omega})$ with a codimension two symplectic submanifold $W_\infty \cong 
\C\PP^{n-1}$ (corresponding to the quotient of $\partial W$) such that $\overline{W} 
\setminus W_\infty$ is symplectomorphic to $\operatorname{int}(W)$; that is, 
$\overline{\omega}|_{\overline{W} \setminus W_\infty} = \omega|_{\operatorname{int}(W)}$. 
The normal bundle of $W_\infty$ is isomorphic to $\oO_{\PP^{n-1}}(2)$. 

We fix a point and a hyperplane in $W_\infty$, and we consider the
moduli space of holomorphic spheres in $\overline{W}$ which are
homotopic to a projective line and pass both through the point and the
hyperplane.  We prove by topological considerations that if
the compactification of that moduli space contains only nodal curves
with at most two irreducible components each of which intersect
$W_\infty$ nontrivially, then some of these nodal curves will
  be composed of two spheres that represent  identical
  homology classes up to torsion.  This implies in particular that the homology
  class of a projective line in $W_\infty$ is  the double of some homology classes in $\overline{W}$ up to torsion.

If $n$ is odd this is a contradiction because the first Chern class of
a line is $n+2$; only at this step we use the hypothesis on the parity
of $n$. This implies that there is either a nodal holomorphic sphere
in $\overline W$ in the homology class of a line of $W_\infty$ with at
least three connected components or a nodal holomorphic sphere with an
irreducible component which is disjoint from $W_\infty$. Since a nodal
sphere intersects $W_\infty$ in exactly two points, in either case at
least one irreducible component must lie entirely in $\op{int}(W)$,
which therefore is not symplectically aspherical.

If $(W, \omega)$ is not symplectically aspherical we lose control on the compactification of 
the moduli space, which is not surprising, given that $(\R\PP^{2n-1}, \xi)$ does admit spherical 
fillings. However, if $W$ is semipositive (and maybe even more generally, using some 
abstract perturbation scheme) we still have enough control to be able to draw conclusions 
about the fundamental group of $W$.
\begin{thm}\label{W is simply connected}
If $(W, \omega)$ is a semipositive symplectic filling of $(\R\PP^{2n-1}, \xi)$, then $W$ is 
simply connected.
\end{thm}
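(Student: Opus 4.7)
The plan is to redeploy the moduli-space setup from Theorem~\ref{nonfillability}, keeping as much control as semipositivity allows. Choose a generic $\omega$-tame almost complex structure $J$ on $\overline{W}$ making $W_\infty$ a $J$-complex submanifold, and form the moduli space $\mathcal{M}$ of $J$-holomorphic spheres in $\overline{W}$ homologous to a projective line in $W_\infty$, carrying three marked points, the first mapped to $p \in W_\infty$ and the second to $H \subset W_\infty$. Because $\overline{W}$ inherits semipositivity from $W$ (the cap only adds spherical classes of positive first Chern number), every stratum of the Gromov compactification $\overline{\mathcal{M}}$ has real codimension at least two, and the evaluation at the third marked point $\op{ev}\colon \overline{\mathcal{M}} \to \overline{W}$ is a pseudocycle of real dimension $2n$.

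A direct enumeration in the model filling $\overline{W}_0 = \PP(\oO \oplus \oO(-2)) \to \C\PP^{n-1}$ --- where lines through $p$ and $H$ are realized as an explicit family of sections --- yields $\deg \op{ev} = 1$, and deformation invariance transports this count to the given $\overline{W}$. This produces a connected family of simply connected spheres sweeping out $\overline{W}$, all passing through $p$, from which a Koll\'ar-style covering argument gives $\pi_1(\overline{W}) = 1$: for any cover $\pi\colon \widetilde{\overline{W}} \to \overline{W}$, spheres lift uniquely after fixing a lift of $p$, so the identity $|\pi_1(\overline{W})| \cdot \deg \widetilde{\op{ev}} = \deg \op{ev} = 1$ forces $\pi_1(\overline{W}) = 1$.

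A van Kampen decomposition $\overline{W} = \op{int}(W) \cup \mathcal{N}(W_\infty)$, with overlap homotopy equivalent to $\partial W = \R\PP^{2n-1}$, then shows $\pi_1(W)/\langle\!\langle r \rangle\!\rangle = \pi_1(\overline{W}) = 1$, where $r$ is the class of a Reeb orbit. Since $r^2 = 1$ in $\pi_1(\partial W) = \Z/2$, the group $\pi_1(W)$ is cyclic of order at most two, and the remaining step is to show that $r$ itself is null-homotopic in $W$. This is the main obstacle. To handle it, I would argue by contradiction: if $\pi_1(W) = \Z/2$, the universal cover $\widetilde{W} \to W$ would be a simply connected semipositive filling of $(S^{2n-1}, \xi_{\mathrm{std}})$, with symplectic cap $\widetilde{\overline{W}}$ containing $\widetilde{W}_\infty \cong \C\PP^{n-1}$ with normal bundle $\oO(1)$, so that lines in $\widetilde{\overline{W}}$ meet $\widetilde{W}_\infty$ transversely in a single point. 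Applying the analogous sphere-sweepout argument in $\widetilde{\overline{W}}$ --- where controlling the potentially non-aspherical moduli requires the full strength of semipositivity --- is expected to furnish enough additional information (beyond the trivial relation $r^2 = 1$ obtained from a naive descent of holomorphic discs to $W$) to derive a contradiction, forcing $\pi_1(W) = 1$.
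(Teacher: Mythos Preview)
Your proposal contains two genuine gaps.

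First, the step from $\pi_1(W)/\langle\!\langle r \rangle\!\rangle = 1$ and $r^2 = 1$ to ``$\pi_1(W)$ is cyclic of order at most two'' is invalid: a group normally generated by a single involution can be arbitrarily large (any nonabelian finite simple group is normally generated by each of its involutions). Van Kampen only tells you that the image $\iota_*(r)$ \emph{normally} generates $\pi_1(W)$, not that it generates it. The paper obtains the stronger statement that $\iota_*\colon \pi_1(\R\PP^{2n-1}) \to \pi_1(W)$ is surjective by a direct argument that never passes through $\pi_1(\overline{W})$: one lifts a generic loop $\gamma \subset W$ to $\mM_z(p_0, H_\infty)$ using $\deg \eval = 1$, and then uses that $\mM_z(p_0, H_\infty) \setminus \eval^{-1}(W_\infty)$ retracts onto $\eval^{-1}(p_0)$ by sliding the free marked point towards the point constrained at $p_0$. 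This homotopes $\gamma$ \emph{inside $\overline{W}\setminus W_\infty \cong \op{int}(W)$} to a loop near $p_0$, hence into a collar of $\partial W$. Your Koll\'ar-style argument only yields $\pi_1(\overline{W}) = 1$, which loses exactly the information about where the contracting homotopy lives.

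Second, you explicitly leave the exclusion of $\pi_1(W) = \Z/2\Z$ unfinished, and the route you sketch---passing to the double cover $\widetilde{W}$, a semipositive filling of $S^{2n-1}$---does not obviously close. Even granting a sphere-sweepout of $\widetilde{\overline{W}}$, it is unclear what contradiction emerges; the Eliashberg--Floer--McDuff conclusion that $\widetilde{W}$ is a ball requires symplectic asphericity, not merely semipositivity, and in any case a free $\Z/2\Z$-action on a ball is not immediately absurd. The paper disposes of this case by a short cohomological argument: if $\iota_*$ were an isomorphism on $\pi_1$, the nontrivial class $\alpha \in H^1(W;\Z/2\Z)$ would restrict to the generator of $H^1(\R\PP^{2n-1};\Z/2\Z)$, so $\iota^*(\alpha^{2n-1}) = (\iota^*\alpha)^{2n-1} \neq 0$; but $\iota^*$ vanishes on $H^{2n-1}$ with field coefficients because $\iota_*[\partial W] = 0$ in $H_{2n-1}(W;\Z/2\Z)$.

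A smaller point: computing $\deg \eval = 1$ by ``deformation invariance'' from the model filling $\overline{W}_0$ presupposes a symplectic deformation from $\overline{W}$ to $\overline{W}_0$, which is not available a priori for an arbitrary filling. The paper instead computes the degree locally near $W_\infty$, where $J$ is integrable and the spheres through a nearby point are explicit sections of $\oO_{\PP^1}(2)$; this works uniformly for every filling.
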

If we apply the same techniques to a symplectically aspherical filling of the standard contact 
structure on $S^{2n-1}$ we obtain that the filling must be diffeomorphic to the ball, a result 
originally due to Eliashberg, Floer and McDuff. 
This is, at least, the fifth proof, after the original one in \cite{mcduff:liouville}, a very similar
one in \cite{oancea-viterbo}, the one in \cite{gnw} using moduli spaces of holomorphic discs
with boundary on a family of LOB's, and the one in \cite{bgz}.
The proof given here is close to the original one, but uses a
different compactification of the filling and is slightly simpler.

\section{The moduli space of lines}

\subsection{The smooth stratum}

By the Weinstein neighbourhood theorem, $W_\infty$ has a tubular neighbourhood that is
symplectomorphic to a neighbourhood of the zero section in the total space of 
$\oO_{\PP^{n-1}}(2)$. Let $\jJ$ be the space of almost complex structures on
$\overline{W}$ which are compatible with $\overline{\omega}$ and coincide with the
natural (integrable) complex structure on $\oO_{\PP^{n-1}}(2)$ on a fixed
neighbourhood of $W_{\infty}$.

For any almost complex structure $J \in \jJ$, any line
$\ell \subset \C\PP^{n-1} \cong W_\infty$ is a $J$-holomorphic
sphere.  Moreover,
\begin{equation}\label{splitting}
  T\overline{W}|_{\ell} \cong \oO_{\PP^1}(2) \oplus \underbrace{ 
\oO_{\PP^1}(1) \oplus \dotsm \oplus \oO_{\PP^1}(1)}_{n-2} 
\oplus \oO_{\PP^1}(2)
\end{equation}
as \emph{holomorphic} vector bundle, where the first $\oO_{\PP^1}(2)$
summand is the tangent bundle of $\ell$, the $(n-2)$-many
$\oO_{\PP^1}(1)$-summands correspond to the normal bundle of $\ell$ in
$\C\PP^{n-1} \cong W_\infty$ and the last  $\oO_{\PP^1}(2)$-summand
is the normal bundle of $W_\infty$ in $\overline{W}$ restricted to $\ell$.

We fix a point $p_0 \in W_\infty$ and a hyperplane
$H_\infty \cong \C\PP^{n-2} \subset W_\infty$ such that
$p_0 \notin H_\infty$.  We denote the moduli space of unparametrised
$J$-holomorphic spheres in $\overline{W}$ that are homotopic
to the lines in $W_\infty \cong \C\PP^{n-1}$ with pointwise
constraints at $p_0$ and $H_\infty$ by $\mM(p_0, H_\infty)$.

We also consider the moduli space $\mM_z (p_0, H_\infty)$ of
unparametrised $J$-holomorphic spheres as above with an extra free
marked point~$z$.  There is a projection
\begin{equation*}
  \mathfrak{f} \colon \mM_z(p_0, H_\infty) \to \mM(p_0, H_\infty).
\end{equation*}
that forgets the marked point.

\begin{lemma}\label{vir-dim}
  $\mM(p_0, H_\infty)$ has expected dimension $2n-2$ and
  $\mM_z(p_0, H_\infty)$ has expected dimension $2n$, where
  $\dim \overline{W}=2n$.
\end{lemma}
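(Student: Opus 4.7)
My plan is to apply the Riemann--Roch index formula for the linearised Cauchy--Riemann operator on a genus-$0$ curve and then bookkeep the two pointwise constraints. The key input is the Chern number of the line class $A \in H_2(\overline{W}; \Z)$, which the splitting~(\ref{splitting}) already computes for me: summing the degrees of the summands gives $\langle c_1(T\overline{W}), A\rangle = 2 + (n-2) + 2 = n + 2$.

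With this in hand, I would invoke the standard dimension formula
$$\virdim \mM_{0,k}(A) = 2n + 2\langle c_1(T\overline{W}), A\rangle - 6 + 2k = 4n - 2 + 2k$$
for the moduli space of unparametrised $J$-holomorphic spheres in class $A$ carrying $k$ free marked points, where the $-6$ accounts for the $\op{PSL}(2,\C)$-reparametrisation and the $+2k$ for the added marked points.  I would then realise $\mM(p_0, H_\infty)$ as a cut-down of $\mM_{0,2}(A)$ under the evaluation constraints $\op{ev}_1 = p_0$ (real codimension~$2n$) and $\op{ev}_2 \in H_\infty$ (real codimension~$4$, because $H_\infty \cong \C\PP^{n-2}$ has real codimension~$4$ in $\overline{W}$), followed by forgetting the two marked points.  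Since a line is embedded in $\overline{W}$ and meets $H_\infty$ in exactly one point, the forgetful map is generically bijective, so
$$\virdim \mM(p_0, H_\infty) = (4n + 2) - 2n - 4 = 2n - 2.$$
Adjoining the extra free marked point~$z$ contributes a further $+2$, yielding $\virdim \mM_z(p_0, H_\infty) = 2n$.

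The only step that could conceivably cause trouble is to confirm that this virtual count actually equals the dimension of the smooth stratum near the lines of $W_\infty$, despite the fact that $J$ is pinned down to an integrable structure there.  But every summand in~(\ref{splitting}) has non-negative degree, so $H^1(\ell, u^*T\overline{W}) = 0$; the linearised operator is therefore surjective, and the evaluation maps out of $\mM_{0,1}(A)$ and $\mM_{0,2}(A)$ are submersive onto $\overline{W}$ near such lines.  Consequently the pointwise and incidence constraints cut down transversely in a neighbourhood of each line in $W_\infty$, confirming that $2n - 2$ (respectively~$2n$) really is the expected dimension of the smooth stratum, as claimed.
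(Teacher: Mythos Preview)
Your proof is correct and follows essentially the same Riemann--Roch computation as the paper: both read off $\langle c_1(T\overline{W}), [\ell]\rangle = n+2$ from the splitting~\eqref{splitting}, add $2n$ for the index, $4$ for two marked points, subtract $2n$ and $4$ for the constraints at $p_0$ and $H_\infty$, and $6$ for reparametrisation.  Your final paragraph on surjectivity of the linearised operator and transversality of the constraints near lines in $W_\infty$ is not needed for the statement of the lemma (which only concerns the \emph{expected} dimension), but it correctly anticipates the content of Proposition~\ref{genericity}.
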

\begin{proof}
  The decomposition~\eqref{splitting} gives
  $\langle c_1(T\overline{W}), [\ell] \rangle= n+2$.  The expected
  dimension of $\mM(p_0, H_\infty)$ is
  $$
  \virdim \mM(p_0, H_\infty)= 2 \langle c_1(T\overline{W}), [\ell] \rangle
  + 2n + 4 - 2n -4 -6= 2n-2 \;.
  $$
  The first two terms compute the index of the linearised
  Cauchy-Riemann operator, the third is the contribution of two extra
  marked points, the fourth and the fifth come from the condition that
  the marked points be mapped to $p_0$ and $H_\infty$, and the last is
  the dimension of the biholomorphism group of the sphere.
\end{proof}

The main reason for keeping the almost complex structure integrable near $W_\infty$ is to 
have positivity of intersection between $W_\infty$ and $J$-holomorphic spheres. This fact 
makes our moduli space particularly well behaved, as the following lemma shows.

\begin{lemma}\label{why things work}
  All $J$-holomorphic spheres of $\mM(p_0, H_\infty)$ are simply
  covered and are either lines in $W_\infty$ or intersect $W_\infty$
  transversely in exactly two points.
\end{lemma}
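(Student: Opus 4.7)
The plan is to combine two inputs: first, the homological intersection number $[\ell] \cdot [W_\infty] = \langle c_1(\oO_{\PP^{n-1}}(2)), [\ell]\rangle = 2$, coming from the normal bundle of $W_\infty$ in $\overline{W}$; and second, positivity of intersection between $J$-holomorphic spheres and $W_\infty$, which applies because $J$ is integrable in a fixed neighbourhood of $W_\infty$, so we are genuinely intersecting complex curves in a complex manifold there.

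Given $u \in \mM(p_0, H_\infty)$, I would first split into cases according to whether or not the image of $u$ is contained in $W_\infty$. If it is, then since $J$ coincides near $W_\infty$ with the natural complex structure on $\oO_{\PP^{n-1}}(2)$, the restriction $J|_{W_\infty}$ is the standard complex structure of $\C\PP^{n-1}$; the sphere $u$ is thus holomorphic in the class of a line, hence parametrizes a projective line, and the primitivity of $[\ell] \in H_2(\C\PP^{n-1};\Z)$ rules out multiple covers. If $u$ is not contained in $W_\infty$, positivity of intersection gives that the total algebraic intersection of $u$ with $W_\infty$ equals a sum of positive local multiplicities adding to $2$. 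But the marked point constraints already provide two distinct intersection points with $W_\infty$, namely $p_0$ and a point of $H_\infty$, distinct since $p_0 \notin H_\infty$; hence both intersections must be transverse and there are no others.

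Finally, I would rule out multiple covers in this second case: if $u = v \circ \phi$ with $\deg \phi = k \geq 2$, then $v(S^2) = u(S^2)$, so both images meet $W_\infty$ in the same geometric points; but $[v] \cdot [W_\infty] = 2/k \leq 1$, so positivity forces $v$ to meet $W_\infty$ in at most one geometric point, contradicting the existence of two distinct intersection points in $u(S^2)$. I expect the main obstacle to be precisely this coordination of positivity of intersection with the marked-point constraints: the distinctness $p_0 \notin H_\infty$, together with the integrability of $J$ near $W_\infty$, is exactly what forces both the tangential and the multiply-covered configurations to be excluded at once.
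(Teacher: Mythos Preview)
Your proof is correct and follows essentially the same approach as the paper's: split according to whether the sphere lies in $W_\infty$, use positivity of intersection together with $[\ell]\cdot[W_\infty]=2$, and exploit the constraint $p_0\notin H_\infty$ to force two distinct transverse intersection points. The only difference is that you spell out the multiple-cover exclusion in the non-contained case (via $[v]\cdot[W_\infty]=2/k$ and the two distinct geometric intersection points), whereas the paper leaves this step implicit in the phrase ``the constraints force two distinct intersection points''; your version makes that deduction explicit and is a welcome clarification.
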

\begin{proof}
Since the algebraic intersection between $W_\infty$ and $\ell$ is $2$, positivity of intersection 
implies that a sphere of  $\mM(p_0, H_\infty)$ is either contained in $W_\infty$, in 
which case it is a line and therefore simply covered, or it intersect $W_\infty$ with total 
multiplicity two. Since the constraints force two distinct intersection points, positivity of 
intersection implies that they are the only ones and that they each have multiplicity one.
\end{proof}
\begin{prop}\label{genericity}
For a generic almost complex structure $J \in \jJ$ the moduli spaces 
$\mM(p_0, H_\infty)$ and $\mM_z(p_0, H_\infty)$ are smooth manifolds of 
dimension $2n-2$ and $2n$ respectively.
\end{prop}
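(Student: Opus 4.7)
By Lemma~\ref{why things work}, the moduli space $\mM(p_0, H_\infty)$ decomposes as a disjoint union of two strata: the set $\mM_{\mathrm{in}}$ of $J$-holomorphic spheres contained in $W_\infty$ (equivalently, lines in $\C\PP^{n-1}$ passing through $p_0$), and the set $\mM_{\mathrm{out}}$ of spheres that meet $W_\infty$ transversely in exactly the two constraint points. Since $J$ is integrable and frozen on a chosen neighbourhood $V$ of $W_\infty$, all allowable perturbations of $J$ live in $\overline{W}\setminus V$, so the two strata must be treated by different transversality arguments.

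For the stratum $\mM_{\mathrm{in}}$: each $\ell\subset W_\infty$ is honestly holomorphic with respect to the integrable structure on $V$, so the linearization of $\bar{\partial}_J$ acts on sections of the holomorphic bundle $u^*T\overline{W}\cong \oO_{\PP^1}(2)\oplus \oO_{\PP^1}(1)^{\oplus(n-2)}\oplus \oO_{\PP^1}(2)$ as the classical Dolbeault operator. Every summand has nonnegative degree, so $H^1$ vanishes and the linearization is automatically surjective for \emph{every} $J\in\jJ$. Moreover $u^*T\overline{W}$ is globally generated, so the evaluation maps at the marked points are submersions and the incidence conditions with $\{p_0\}$ and $H_\infty$ cut out transversely. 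Hence $\mM(p_0, H_\infty)$ is smooth of dimension $2n-2$ along $\mM_{\mathrm{in}}$, which itself forms a codimension-two complex submanifold canonically identified with the $\C\PP^{n-2}$ of lines through $p_0$ in $W_\infty$.

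For the stratum $\mM_{\mathrm{out}}$: the curves are simply covered by Lemma~\ref{why things work}, and their images necessarily meet $\overline{W}\setminus V$ since they hit $W_\infty$ in only two points. I would apply the standard Sard-Smale transversality machinery, as in McDuff-Salamon, to the universal moduli space over the Banach manifold of admissible $J\in\jJ$. The existence of somewhere-injective points of $u$ lying outside $V$ is precisely what makes the universal linearization surjective, so Sard-Smale yields a residual subset of $\jJ$ for which $\mM_{\mathrm{out}}$ is smooth of dimension $2n-2$. Finally, $\mM_z(p_0, H_\infty)$ is the universal curve over $\mM(p_0, H_\infty)$, hence a smooth $S^2$-bundle of total dimension $2n$.

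The main subtlety lies in reconciling the restriction on allowable $J$'s with the standard transversality argument on $\mM_{\mathrm{out}}$; Lemma~\ref{why things work} is precisely the input needed, guaranteeing that each simply covered curve has injectivity points in $\overline{W}\setminus V$ where $J$ can be genuinely perturbed.
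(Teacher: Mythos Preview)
Your overall strategy matches the paper's, but there is a genuine gap in how you partition the moduli space. You split $\mM(p_0,H_\infty)$ into $\mM_{\mathrm{in}}$ (curves lying in $W_\infty$) and $\mM_{\mathrm{out}}$ (curves meeting $W_\infty$ transversely in two points), and then assert that every curve in $\mM_{\mathrm{out}}$ ``necessarily meets $\overline{W}\setminus V$ since it hits $W_\infty$ in only two points.'' This implication is false. A $J$-holomorphic sphere obtained as the graph, over a line $\ell\subset W_\infty$, of a small holomorphic section of $\oO_{\PP^1}(2)$ vanishing at $p_0$ and at $\ell\cap H_\infty$ lies entirely in the integrable neighbourhood $V$ yet intersects $W_\infty$ transversely in exactly those two points. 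For such a curve your Sard--Smale argument fails outright: there are no somewhere-injective points in $\overline{W}\setminus V$, so perturbing $J$ outside $V$ does nothing.

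The paper avoids this by choosing the correct dichotomy: spheres \emph{contained in} $V$ versus spheres that \emph{leave} $V$. Any sphere entirely in $V$ is, by the integrable model, the graph of a section over a line and therefore inherits the same holomorphic splitting $\oO_{\PP^1}(2)\oplus\oO_{\PP^1}(1)^{\oplus(n-2)}\oplus\oO_{\PP^1}(2)$ as the lines themselves; your positivity/Serre-duality argument then gives automatic Fredholm regularity for \emph{all} of these, not just those in $W_\infty$. Spheres that exit $V$ genuinely have injective points in the region where $J$ is free, and your Sard--Smale argument applies to them. With this corrected decomposition the rest of your proof goes through.
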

\begin{proof}
  The $J$-holomorphic spheres of $\mM(p_0, H_\infty) $ which are contained in the neighbourhood
  of $W_\infty$ where $J$ is integrable correspond to holomorphic sections of
$\oO_{\PP^{n-1}}$ and therefore admit a decomposition of the restriction of $T\overline{W}$
as in Equation~\eqref{splitting}.  Since the decomposition is into positive holomorphic line
bundles, those spheres are Fredholm regular for every almost complex structures $J \in \jJ$
because the Cauchy-Riemann operator on a positive line bundle over $\C\PP^1$ is surjective
by Serre duality; see \cite[Lemma~3.3.1]{MS}

All other $J$-holomorphic spheres of  $\mM(p_0, H_\infty)$ are Fredholm
regular for a generic $J \in \jJ$ because they are simply covered and intersect the region
where $J$ is generic.  Moreover, the pointwise constraints 
cut out $\mM(p_0, H_\infty)$ transversely for a generic $J$: for spheres near $W_\infty$
this is an explicit computation, and for all other spheres of $\mM(p_0, H_\infty)$ it follows
from \cite[Theorem 3.4.1]{MS} and \cite[Remark 3.4.8]{MS}.  Therefore ${\mathcal 
  M}(p_0, H_\infty)$ is a smooth manifold of the dimension predicted by
Lemma~\ref{vir-dim}.  The corresponding statements for $\mM_z(p_0, H_\infty)$ follow from
those for $\mM(p_0, H_\infty)$.
\end{proof}

\subsection{The compactified moduli space}

Let $\overline\mM(p_0, H_\infty)$ and $\overline\mM_z(p_0, H_\infty)$ be 
the Gromov compactifications of $\mM(p_0, H_\infty)$ and $\mM_z(p_0, 
H_\infty)$ respectively, and let 
$$\overline{\mathfrak{f}} \colon \overline\mM_z(p_0, H_\infty) \to  
\overline\mM(p_0, H_\infty)$$ 
be the forgetful map. We denote $\mM^{\op{red}}(p_0, H_\infty)= 
\overline\mM(p_0, H_\infty) \setminus \mM(p_0, H_\infty)$ and 
$\mM^{\op{red}}_z(p_0, H_\infty)= \overline\mM_z(p_0, H_\infty) \setminus 
\mM_z(p_0, H_\infty)$.

\begin{lemma}\label{bubbles are nice}
If $\overline{W} \setminus W_\infty$ is symplectically aspherical, then every nodal sphere of 
$\mM^{\op{red}}(p_0, H_\infty)$ has exactly two irreducible components, one of which 
intersects $W_\infty$ only at $p_0$ and the other one which intersects $W_\infty$ only at a 
point of $H_\infty$. Both components are simply covered and their intersection with $W_\infty$ 
is transverse.
\end{lemma}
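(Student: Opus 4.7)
The plan is to combine symplectic asphericity of $\overline{W}\setminus W_\infty$, positivity of intersection with $W_\infty$, and the stability condition on a genus-zero stable map. Let $u\in\mM^{\op{red}}(p_0,H_\infty)$ be a nodal stable map with non-constant irreducible components $C_1,\dots,C_k$ and possibly some constant (ghost) components. The total homology class of $u$ is $[\ell]$, so $[u]\cdot[W_\infty]=[\ell]\cdot[W_\infty]=2$; and since $J$ is integrable on a neighbourhood of $W_\infty$, every intersection of a $J$-holomorphic component not contained in $W_\infty$ with $W_\infty$ contributes positively.

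First I would exclude two degenerate possibilities for a non-constant $C_i$. If $C_i$ were disjoint from $W_\infty$, it would be a $J$-holomorphic sphere of positive $\omega$-area in $\overline{W}\setminus W_\infty$, contradicting symplectic asphericity. If $C_i\subset W_\infty\cong\C\PP^{n-1}$, then $C_i$ is a rational curve of some degree $d\ge 1$, and its intersection with $W_\infty$ inside $\overline{W}$ equals $\int_{C_i}c_1(\oO_{\PP^{n-1}}(2))=2d\ge 2$; together with the non-negative contributions of the remaining non-constant components this forces $d=1$ with $C_i$ the only non-constant component of $u$, a configuration that reduces, by the ghost analysis described below, to a smooth element of $\mM$ and so contradicts $u\in\mM^{\op{red}}$.

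Consequently every non-constant $C_i$ meets $W_\infty$ transversely with multiplicity $\ge 1$, so $k\le 2$ and every such intersection multiplicity is exactly $1$. A single non-constant component would once again reduce to an element of $\mM$ after discarding ghosts, so $k=2$. The two marked points cannot both lie on the same $C_i$, because $C_i$ would then meet $W_\infty$ at the two distinct points $p_0$ and a point of $H_\infty$, contradicting the multiplicity $1$ just obtained. Thus one marked point lies on one component (and is mapped to $p_0$) and the other on the other component (and is mapped to a point of $H_\infty$), and these are precisely the unique $W_\infty$-intersections of the two components. Simple coveredness is automatic: an $m$-fold cover of a simple sphere intersects $W_\infty$ with multiplicity a positive multiple of $m$, so $m=1$.

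The main obstacle is ruling out constant components throughout. A ghost $E$ maps to a single point $q\in\overline{W}$, so it carries at most one marked point, since the two constraint targets $p_0$ and $H_\infty$ are disjoint. The stability requirement of at least three special points on $E$ then forces $E$ to have at least two nodes, and a case-by-case walk through the tree structure of the domain — using that there are at most two non-constant components and exactly two marked points — shows that every configuration involving a ghost either fails stability outright or forces two non-constant components to share an image point lying in $W_\infty$, violating the intersection multiplicity $1$ already established. Once the ghosts are eliminated, the stable map has precisely the structure asserted by the lemma.
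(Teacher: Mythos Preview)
Your argument is correct and follows the same overall strategy as the paper: symplectic asphericity forces every non-constant component to meet $W_\infty$, and positivity of intersection together with $[\ell]\cdot[W_\infty]=2$ then bounds the number of components and forces each local multiplicity to be one, whence simple coveredness. The one genuine difference is in how you exclude components lying inside $W_\infty$. The paper uses an area argument: any bubble component has symplectic area strictly smaller than that of $\ell$, while a line already realises the minimal positive area among holomorphic curves in $\C\PP^{n-1}$, so no bubble can sit in $W_\infty$. You instead compute the contribution of such a component to the intersection with $W_\infty$ via the normal bundle, $[C_i]\cdot[W_\infty]=2d\ge 2$, and combine this with asphericity to force it to be the only non-constant component. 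Both routes work; the paper's is marginally quicker, yours avoids invoking the area spectrum of $\C\PP^{n-1}$.

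You are also more scrupulous than the paper about ghost components: the paper's proof ignores them entirely here and only acknowledges them in a later footnote, whereas you sketch an honest elimination via stability and the tree structure. Your sketch is correct in outcome, though the contradiction you cite (``two non-constant components share an image point in $W_\infty$, violating multiplicity~$1$'') is not quite the right one as stated---sharing a single point of $W_\infty$ is not by itself a contradiction; rather, the remaining marked point then forces one of those components through a second point of $W_\infty$, and \emph{that} violates the multiplicity bound.
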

\begin{proof}
  None of the irreducible components of nodal spheres in
    $\overline\mM(p_0, H_\infty)$ is contained in $W_\infty$ because
  any bubble component needs to have positive symplectic area
  strictly smaller than the symplectic area of $\ell$, but the
  homology class of $\ell$ has the smallest positive symplectic area
  in $\C\PP^{n-1}$.  Then by positivity of intersection with
  $W_\infty$ a nodal sphere must intersect $W_\infty$ in at most two
  points.  Moreover, if $\overline{W} \setminus W_\infty$ is
  symplectically aspherical, every irreducible component must
  intersect $W_\infty$.  This implies that there are exactly two
  irreducible components and the intersection of each with $W_\infty$
  has multiplicity one.  Therefore both components are simply covered.
\end{proof} 

This lemma implies that we have enough topological control on the nodal curves to show that 
they have  smooth moduli spaces.

\begin{lemma} \label{Mred is smooth}
The moduli space $\mM^{\op{red}}(p_0, H_\infty)$ is a smooth manifold of dimension
$2n-4$. The forgetful map 
$$\mathfrak{f}^{\op{red}} \colon \mM_z^{\op{red}}(p_0, 
H_\infty) \to \mM^{\op{red}}(p_0, H_\infty)$$
is a locally trivial fibration with fibre $S^2 \vee S^2$.
\end{lemma}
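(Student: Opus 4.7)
The plan is to express $\mM^{\op{red}}(p_0, H_\infty)$ as a fibred product of two simpler moduli spaces and then apply standard transversality. By Lemma~\ref{bubbles are nice}, every element of $\mM^{\op{red}}(p_0, H_\infty)$ is a pair $(u_1, u_2)$ of simply covered $J$-holomorphic spheres meeting transversely at a single interior node: $u_1$ represents a class $A_1 \in H_2(\overline{W}; \Z)$ and passes through $p_0$, while $u_2$ represents $A_2 = [\ell] - A_1$ and meets $H_\infty$. Both classes satisfy $A_i \cdot [W_\infty] = 1$, so $c_1(A_1) + c_1(A_2) = n+2$, and there are only finitely many such splittings since $\omega(A_1)$ and $\omega(A_2)$ are positive with sum $\omega([\ell])$. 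For each splitting I would introduce auxiliary moduli spaces $\mM_1(A_1)$ and $\mM_2(A_2)$ of unparametrised simple $J$-spheres of the respective classes, each carrying two marked points --- one constrained (to $p_0$ or to a point of $H_\infty$) and one free --- so that $\mM^{\op{red}}(p_0, H_\infty)$ is realised as the fibred product of $\mM_1(A_1)$ and $\mM_2(A_2)$ along the evaluation maps at their free marked points.

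The main obstacle is Fredholm regularity, since transversality cannot be forced inside the neighbourhood $N$ of $W_\infty$ on which $J$ is fixed to be integrable. The key observation is that no irreducible component of a nodal curve can lie entirely in $N$: the image of $H_2(N) = \Z\cdot [\ell]$ in $H_2(\overline{W})$ consists of multiples $d\cdot i_*[\ell]$, and because $[\ell]\cdot [W_\infty] = 2$ in $\overline{W}$ the intersection number of every such class with $W_\infty$ is even, contradicting $A_i\cdot [W_\infty] = 1$. Hence each $u_i$ passes through the region where $J$ is free to be perturbed, and because it is simply covered, \cite[Theorem 3.4.1 and Remark 3.4.8]{MS} provide Fredholm regularity of $u_i$ together with transversality of the point and hyperplane constraints. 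A further standard parametric transversality argument then makes the pair of evaluation maps at the free marked points transverse to the diagonal in $\overline{W}\times\overline{W}$, so the fibred product is a smooth manifold of dimension
\[
  (2c_1(A_1) - 2) + (2c_1(A_2) + 2n - 6) - 2n = 2(n+2) - 8 = 2n - 4.
\]

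For the forgetful map, $\mM_z^{\op{red}}(p_0, H_\infty)$ parametrises the same nodal curves decorated with an extra free marked point $z$, which may sit on either irreducible component; the fibre over $[u_1 \cup u_2]$ is thus the underlying nodal curve, homeomorphic to $S^2\vee S^2$. Local triviality of $\mathfrak{f}^{\op{red}}$ follows from the standard universal-curve construction applied to a stratum of constant combinatorial type: the node depends smoothly on $(u_1, u_2)$ as a point of $\overline{W}$, the combinatorial type of the stable map never changes over $\mM^{\op{red}}(p_0, H_\infty)$, and so locally the family of underlying nodal curves can be trivialised, exhibiting $\mathfrak{f}^{\op{red}}$ as a locally trivial $S^2\vee S^2$-bundle.
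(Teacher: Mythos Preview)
Your proof is correct and follows essentially the same approach as the paper: both rely on Lemma~\ref{bubbles are nice} to conclude that the irreducible components are simply covered and meet the region where $J$ can be made generic, and then invoke the transversality theory from \cite{MS}. The paper simply cites \cite[Theorem~6.2.6]{MS}, which packages the fibred-product and constraint transversality you spell out by hand; your explicit verification that no component can lie entirely in the integrable neighbourhood (via the parity of $A_i\cdot [W_\infty]$) is a careful detail that the paper leaves implicit.
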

\begin{proof}
By Lemma~\ref{bubbles are nice} the irreducible components of the nodal spheres of 
the moduli space $\mM^{\op{red}}(p_0, H_\infty)$ are simply covered and intersect 
the region where the almost complex structure can be made generic. Then the statement 
follows from \cite[Theorem 6.2.6]{MS}.
\end{proof}

What we have shown so far about $\overline\mM_z(p_0, H_\infty)$ is enough to show that the
image of the evaluation is a pseudocycle in $\overline{W}$ (see \cite[Section 6.5]{MS}).
While pseudocycles are good enough for certain degree arguments, as for example in
Section~\ref{sec: pi_1}, in the proof of our main theorem we will need a differentiable
structure on the compactified moduli spaces. 
The reason is that our proof is based on studying the properties
of the homology class $[\eval^{-1}(\ell)]$ in $\overline\mM_z(p_0, H_\infty)$, and we
have found no better way to show that $\eval^{-1}(\ell)$ is well-behaved
than by the implicit function theorem. Since there is no reason to expect that $\ell$ can be
made disjoint from the image of  $\mM_z^{\op{red}}(p_0, H_\infty)$ by the evaluation map,
we need a smooth structure on the whole compactified moduli space, and not only in its
irreducible part. Luckily our moduli space is simple enough that standard gluing theory (as
explained, for example, in \cite{MS}) already produces a smooth structure.

In the rest of the
section we will sketch the construction of a $C^1$-structure, which in turn can be promoted
to a smooth structure by a classical result in differential topology;  see for example \cite[Section~2.2, Theorem~2.10]{hirsch:diff_top}.

In order to endow $\overline\mM(p_0, H_\infty)$ and $\overline\mM_z(p_0, H_\infty)$ with the
structure of a $C^1$-manifold, we exhibit them as union of two open manifolds which are
patched together by a diffeomorphism of class $C^1$. The two patches are the irreducible
strata $\mM(p_0, H_\infty)$ and $\mM_z(p_0, H_\infty)$ on one side, and suitable fibrations
over the reducible strata $\mM^{\op{red}}(p_0, H_\infty)$ and
$\mM^{\op{red}}_z(p_0, H_\infty)$ on the other hand.  The fibres
  of those fibrations are, roughly speaking, spaces of gluing parameters.  We are going to
  define them momentarily.

The first step in our construction is to introduce local gauge fixing conditions to simplify the
presentation of the moduli spaces, so that they almost become spaces of parametrised
$J$-holomorphic spheres.
Identify the neighbourhood of $W_\infty$ in $\overline{W}$ with a
neighbourhood of the $0$-section of $\oO_{\PP^{n-1}}(2)$ as
already discussed above.  The hyperplane~$H_\infty$ is the $0$-set of
a holomorphic section~$\sigma$ in $\oO_{\PP^{n-1}}(1)$, and
it follows that $\sigma^2$ is a section of
$\oO_{\PP^{n-1}}(2)$ that has a zero of order two along
$H_\infty$.  Multiplying $\sigma^2$ with a small constant, we can
assume that its image lies in an arbitrarily small neighbourhood of the
$0$-section.  Its graph is a $J$-holomorphic hypersurface  in
$\overline{W}$ that we will call $\widetilde{W}_\infty$.
In particular 
$\widetilde{W}_\infty \cap W_\infty=H_\infty$ and $T\widetilde{W}_\infty|_{H_\infty}= 
TW_\infty|_{H_\infty}$. Then every sphere of $\mM(p_0, H_\infty)$ which is not 
contained in $W_\infty$ intersects $\widetilde{W}_\infty$ in two points: one in $H_\infty$ and 
one in $\widetilde{W}_\infty \setminus H_\infty$.

Let $\widetilde\mM(p_0, H_\infty)$ be the open subset of 
$\mM(p_0, H_\infty)$ consisting of those spheres which are not contained in 
$W_\infty$  and $\widetilde\mM_z(p_0, H_\infty)$ the corresponding open
subset of $\mM_z(p_0, H_\infty)$.  By the discussion in the previous paragraph,
we can fix a parametrisation for every element in $\widetilde\mM(p_0, H_\infty)$ identifying
this moduli space with the set of $J$-holomorphic maps $u \colon S^2 \to
\overline{W}$ whose image is homotopic to $\ell$ but not contained in $W_\infty$, and such 
that  $u(0)=p_0$, $u(1) \in \widetilde{W}_\infty \setminus H_\infty$ and $u(\infty) \in H_\infty$.

We also denote by $\widetilde\mM^{\op{red}}(p_0, H_\infty)$ the set of pairs of
$J$-holomorphic maps $(u^0, u^\infty)$, with $u^0, u^\infty \colon S^2 \to \overline{W}$,
such that
\begin{align*}
  u^0(0) &= p_0 \;, & u^0(\infty) &= u^\infty(0) \;, &  u^\infty(\infty) &\in H_\infty \;,  \\
  u^0(1) &\in \widetilde{W}_\infty \setminus H_\infty \;, & |d_\infty u^\infty| &= 1 &
\end{align*}
and the image of the ``connected sum map'' $u^0 \# u^\infty \colon S^2 \# S^2 \cong S^2 \to \overline{W}$ is homotopic to $\ell$.    Here $|d_\infty u^\infty|$ denotes the
norm of the differential of $u^\infty$ at $\infty \in S^2$ computed with respect to the round
metric on $S^2$ and the metric induced by $J$ and $\overline{\omega}$ on $\overline{W}$.
Only the component~$u^0$ meets
$\widetilde{W}_\infty \setminus H_\infty$, because $u^1$ already
intersects $\widetilde{W}_\infty$ in $H_\infty$.

The group of complex numbers of modulus $1$ acts on 
$\widetilde\mM^{\op{red}}(p_0, H_\infty)$ by
$$\theta \cdot (u^0, u^\infty)= \bigl(u^0, u^\infty(\theta^{-1}
\cdot)\bigr)$$
and the quotient by this action is $\mM^{\op{red}}(p_0, H_\infty)$.  This
implies that the projection $\widetilde\mM^{\op{red}}(p_0, H_\infty) \to
\mM^{\op{red}}(p_0, H_\infty)$ is a principal $S^1$-bundle which need not be trivial.

\medskip

The second step is to define the fibrations over $\mM^{\op{red}}(p_0, H_\infty)$ and $\mM^{\op{red}}_z(p_0, H_\infty)$ which give one of the two patches.
Let $\pi \colon S^2 \times S^2 \dashrightarrow S^2$ be the rational map $\pi(x,y)= y/x$, 
which is not defined at the points $(0,0)$ and $(\infty, \infty)$. If we make $S^1$ act on 
$S^2 \times S^2$ by $\theta \cdot (x,y) = (x, \theta y)$ and on $S^2$ by $\theta 
\cdot w = \theta w$, then $\pi$ is $S^1$-equivariant. Let $\mathfrak{X}$ be the
smooth variety obtained by blowing up $S^2 \times S^2$ at $(0,0)$ and $(\infty, \infty)$.
The action of $S^1$ on $S^2 \times S^2$ induces an action on $\mathfrak{X}$,  and $\pi$
extends to a smooth $S^1$-equivariant map $\pi \colon \mathfrak{X} \to S^2$. 

We denote by $D_\epsilon$ the disc with centre in $0$ and radius $\epsilon$ in $\C \subset 
S^2$ and $\mathfrak{X}_\epsilon = \pi^{-1}(D_\epsilon)$.  We define $E_\epsilon = \widetilde\mM^{\op{red}}(p_0, H_\infty) \times_{S^1} D_\epsilon$ and $X_\epsilon = 
 \widetilde\mM^{\op{red}}(p_0, H_\infty)  \times_{S^1} \mathfrak{X}_\epsilon$.  
Both $E_\epsilon$ and $X_\epsilon$ are fibre bundles over  $\mM^{\op{red}}(p_0, H_\infty)$ and there is a bundle map
$$\xymatrix{
X_\epsilon \ar[rr]^{\varpi} \ar[dr] & & E_\epsilon \ar[dl] \\
& \mM^{\op{red}}(p_0, H_\infty) &
} $$

The zero section $E_0$ of $E$ is, of course, diffeomorphic to $\mM^{\op{red}}(p_0, H_\infty)$. 
Let us denote $\mathfrak{X}_0= \pi^{-1}(0)$ and $X_0= \varpi^{-1}(E_0)$.
We observe that $X_0$ is diffeomorphic to  $\mM^{\op{red}}_z(p_0, H_\infty)$.  In fact  $X_0 = \widetilde{\mM}^{\op{red}}(p_0, H_\infty) \times_{S^1} \mathfrak{X}_0$ and $\mathfrak{X}_0 = \{ y=0 \} \cup \{ x= \infty \}$, so we can identify the sphere  $\{ y=0 \}$ with the domain of $u^0$  and the sphere $ \{ x= \infty \}$ with the domain of $u^\infty$.

\medskip

The third, and last, step is the definition of the gluing maps between the
two patches.  Let $\dot E_\epsilon$ denote $E_\epsilon$ with the zero section
removed, and $\dot X_\epsilon$ the preimage of $\dot E_\epsilon$. We can identify
$\dot E_\epsilon \cong [\epsilon^{-1}, +\infty) \times  \widetilde\mM^{\op{red}}(p_0, H_\infty)$, 
and therefore standard gluing theory (see for example \cite[Chapter~10]{MS}) yields
 a $C^1$-embeddings $\mathfrak{g} \colon \dot E_\epsilon \to
  \widetilde\mM (p_0, H_\infty)$: if $(r, (u^0, u^{\infty})) \in
[\epsilon^{-1}, +\infty) \times  \widetilde\mM^{\op{red}}(p_0, H_\infty) \cong \dot E_\epsilon$,
then $\mathfrak{g}(r, (u^0, u^{\infty})) \in \widetilde\mM(p_0, H_\infty)$ is the
$J$-holomorphic sphere obtained by gluing $u^0$ and $u^{\infty}$ with gluing parameter
$R=\sqrt{r}$; see \cite[Section~10.1]{MS}.

We can also define a $C^1$-embedding $\mathfrak{G} \colon \dot X_\epsilon \to
\widetilde\mM_z (p_0, H_\infty)$ such  that the
diagram
$$\xymatrix{
  \dot X_\epsilon \ar[d]_{\mathfrak{G}} \ar[r]^{\varpi} & \dot E_\epsilon
  \ar[d]_{\mathfrak{g}} \\
  \widetilde\mM_z (p_0, H_\infty) \ar[r]^{\mathfrak{f}} &  \widetilde\mM(p_0, H_\infty)
}$$
commutes.
To define $\mathfrak{G}$ it is enough to identify $\varpi^{-1}(e)$ with the
domain of $\mathfrak{g}(e)$ for all $e \in \dot E_\epsilon$. We recall that $\mathfrak{g}(e)$
is obtained by deforming a preglued map $\mathfrak{p}(e)$ with the same domain, so it is
enough to identify (in a smooth way) $\varpi^{-1}(e)$ with the domain of the preglued map
$\mathfrak{p}(e)$, whose construction we sketch now.
Denote $e=[((u^0, u^\infty),t)]$ with
$t \in D_\epsilon$ and $(u^0, u^\infty) \in \widetilde{\mM}^{\op{red}}(p_0, H_\infty)$, and $S_t= \pi^{-1}(t)$.
We define $\widetilde{\mathfrak{p}}(u^0, u^\infty,t) \colon S_t \to
\overline{W}$ by 
$$\widetilde{\mathfrak{p}}(u^0, u^\infty,t)(x,y)= \begin{cases} u^0(x) & \text{if } |x| <
  \frac{1}{2 \sqrt{|t|}}, \\ u^{\infty}(y) & \text{if } |x|> \frac{2}{\sqrt{|t|}} \end{cases}$$
and in the region $\left \{ (x,y) \in S_t :  \frac{1}{2 \sqrt{|t|}} \le |x| \le \frac{2}{\sqrt{|t|}} \right \}$ we interpolate between $u^0$ and $u^\infty$ while remaining close to $u^0(\infty)= u^\infty(0)$. It is possible to chose the interpolation compatibly with the
$S^1$-actions on $\widetilde{\mM}^{\op{red}}(p_0, H_\infty)$ and $\mathfrak{X}_\epsilon$
so that the map $\widetilde{\mathfrak{p}}(u^0, u^\infty,t)$ induces a well defined map
$\mathfrak{p}(e) \colon \varpi^{-1}(e) \to \overline{W}$.
If we choose a representative
$(u^0, u^\infty,t)$ of $e$ where $t$ is real positive and we define $R=\frac{1}{\sqrt{t}}$,
we see that the pregluing $\mathfrak{p}(e)$ is the same as the pregluing defined in
\cite[Section~10:2]{MS}, up to a holomorphic change of coordinates and the introduction
of a  constant $\delta$, which is necessary for the gluing estimates, but does not change in
any significant way the geometric picture we have described.

Combining \cite[Theorem~6.2.6]{MS} with the discussion above we obtain
the following structural result for the moduli spaces we are
interested in.

\begin{prop}
  The moduli spaces $\overline\mM(p_0, H_\infty)$ and
  $\overline\mM_z(p_0, H_\infty)$ are closed and orientable
  $C^1$-manifolds and there is a $C^1$-map
  $$\overline{\mathfrak{f}} \colon  \overline\mM_z(p_0, H_\infty) \to 
  \overline\mM(p_0, H_\infty)$$
  which forgets the marked point.
\end{prop}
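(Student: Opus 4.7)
The plan is to cover $\overline\mM(p_0, H_\infty)$ with two open $C^1$-charts and check they overlap $C^1$-compatibly, and then repeat the construction for $\overline\mM_z(p_0, H_\infty)$. The first chart is the smooth stratum $\mM(p_0, H_\infty)$, smooth by Proposition~\ref{genericity}. The second chart is the total space $E_\epsilon$ of the smooth fibre bundle over $\mM^{\op{red}}(p_0, H_\infty)$, which is smooth since the base is smooth by Lemma~\ref{Mred is smooth} and the fibre $D_\epsilon$ is smooth. The zero section $E_0 \subset E_\epsilon$ is canonically identified with the reducible stratum, and the $C^1$-embedding $\mathfrak{g} \colon \dot E_\epsilon \to \widetilde\mM(p_0, H_\infty)$ obtained from standard gluing theory provides the transition map on the overlap between the two charts.

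The key technical step is to extend $\mathfrak{g}$ continuously across the zero section so that it becomes a homeomorphism from $E_\epsilon$ onto an open neighbourhood of $\mM^{\op{red}}(p_0, H_\infty)$ in the Gromov topology. Surjectivity of this extended map onto such a neighbourhood is the statement that every $J$-holomorphic sphere sufficiently close to a nodal curve in $\mM^{\op{red}}(p_0, H_\infty)$ arises from the gluing construction, and injectivity for $\epsilon$ small amounts to uniqueness of the gluing parameter; both are covered by \cite[Theorem~6.2.6]{MS} together with the gluing results of \cite[Chapter~10]{MS}. This is the step I expect to be the main obstacle, because it requires the gluing map to exhaust a full Gromov-neighbourhood of the reducible stratum rather than merely parametrise a piece of it, and this surjectivity uses crucially the simplicity and transversality statements of Lemma~\ref{bubbles are nice} and Lemma~\ref{Mred is smooth}.

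Once this is in hand, the union of the two charts covers $\overline\mM(p_0, H_\infty)$ by Gromov compactness, and the transition is $C^1$ by construction; closedness then follows, again using Lemma~\ref{bubbles are nice} to rule out other limits. The same reasoning applied to $X_\epsilon$, $\mathfrak{G}$ and $\mathfrak{f}$ produces the $C^1$-structure on $\overline\mM_z(p_0, H_\infty)$. Orientability is inherited from the canonical orientation on moduli spaces of simply covered $J$-holomorphic spheres (compare \cite[Chapter~8]{MS}) and extends across the reducible stratum because the standard gluing construction preserves orientations.

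Finally, the commutative diagram relating $\mathfrak{g}$, $\mathfrak{G}$, $\mathfrak{f}$ and $\varpi$ shows that the forgetful map $\overline{\mathfrak{f}}$ is $C^1$ globally: on the open stratum it coincides with the smooth map $\mathfrak{f}$, on a neighbourhood of the reducible stratum it is identified with the smooth bundle map $\varpi$ in the chart coordinates, and the two expressions agree on the overlap precisely by the commutativity of the diagram. Promoting the $C^1$-structure to a genuine smooth structure is then automatic by \cite[Section~2.2, Theorem~2.10]{hirsch:diff_top}, as already noted in the text.
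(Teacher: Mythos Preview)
Your proposal is correct and follows essentially the same approach as the paper: the proposition is stated there as an immediate consequence of the three-step construction preceding it (gauge fixing, the fibrations $E_\epsilon$ and $X_\epsilon$, and the gluing embeddings $\mathfrak{g}$, $\mathfrak{G}$) combined with \cite[Theorem~6.2.6]{MS}, and you have faithfully unpacked exactly that argument, including the surjectivity-of-gluing point and the use of the commutative diagram for $\overline{\mathfrak{f}}$.
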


While $\overline\mM(p_0, H_\infty)$ is not a priori connected, since we have not 
ruled out that a $J$-holomorphic sphere could be homotopic to a line $\ell \subset 
W_\infty$ but not homotopic through $J$-holomorphic spheres, we can assume without loss of 
generality that $\overline\mM(p_0, H_\infty)$ is connected  by restricting our attention 
to the connected component which contains a line $\ell \subset W_\infty$.

\section{Proof of the main theorem}

\subsection{Degree of the evaluation map}

Let $\eval \colon \overline\mM_z(p_0, H_\infty) \to \overline{W}$ be the 
evaluation map at the free marked point. 

\begin{lemma} \label{U exists}
There is an open subset $U \subset \overline{W}$ such that every $J$-holomorphic sphere of
$\overline\mM(p_0, H_\infty)$ passing through a point of $U$ belongs to ${\mathcal 
M}(p_0, H_\infty)$ and its image is contained in the neighbourhood
of $W_\infty$ on which $J$ is integrable.
\end{lemma}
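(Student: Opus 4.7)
The plan is to take $U$ to be a sufficiently small open neighbourhood in $\overline{W}$ of a point $q_0 \in W_\infty \setminus (\{p_0\} \cup H_\infty)$. Such a $q_0$ lies on a unique projective line $\ell_0 \subset W_\infty$ through $p_0$, and $\ell_0 \in \mM(p_0, H_\infty)$ has image contained in $W_\infty$ and hence in the integrable neighbourhood $N$ of $W_\infty$ on which $J$ is integrable.

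I would argue by contradiction using Gromov compactness. If no such $U$ existed, one could find a sequence $q_k \to q_0$ and a sequence $u_k \in \overline\mM(p_0, H_\infty)$ whose images pass through $q_k$, with either $u_k$ nodal or $u_k$ smooth but with image not contained in $N$. Passing to a subsequence, the $u_k$ Gromov converge to some $u_\infty \in \overline\mM(p_0, H_\infty)$ whose image passes through $q_0$. Lemma~\ref{why things work} says that a smooth element of $\mM(p_0, H_\infty)$ not contained in $W_\infty$ meets $W_\infty$ only at $p_0$ and at one point of $H_\infty$, and Lemma~\ref{bubbles are nice} says that a nodal element meets $W_\infty$ with exactly the same pattern. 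Since $q_0 \notin \{p_0\} \cup H_\infty$, the limit $u_\infty$ must be a line in $W_\infty$ passing through $p_0$ and $q_0$, and by uniqueness this forces $u_\infty = \ell_0$.

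To conclude I would invoke the standard consequences of Gromov convergence to a smooth curve. The uniform positive lower bound on the energy of nonconstant $J$-holomorphic spheres prevents bubbles from being absorbed in the limit, so convergence to the smooth $\ell_0$ forces $u_k$ to be smooth for $k$ large. Likewise, Hausdorff convergence of images guarantees that for $k$ large the image of $u_k$ lies in an arbitrarily small neighbourhood of $\ell_0 \subset W_\infty$, and therefore inside $N$. Both conclusions contradict the choice of $u_k$, so a small enough neighbourhood of $q_0$ serves as the desired $U$.

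The main subtlety is justifying that a sequence of nodal curves cannot Gromov converge to a smooth one: this is the standard ``energy quantisation'' phenomenon for $J$-holomorphic spheres, and it is the only nontrivial input in the argument — the rest is a clean application of positivity of intersection combined with the explicit structure of lines through $p_0$ in $W_\infty \cong \C\PP^{n-1}$.
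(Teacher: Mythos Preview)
Your argument is correct and essentially identical to the paper's: both pick $q_0 \in W_\infty \setminus (\{p_0\}\cup H_\infty)$, observe via positivity of intersection that the only element of $\overline\mM(p_0,H_\infty)$ through $q_0$ is the line $\ell_0$, and then use Gromov compactness to obtain the neighbourhood~$U$. The only cosmetic difference is that the paper treats the two failure modes separately (first excluding nodal curves near $q_0$ by compactness of $\mM_z^{\op{red}}(p_0,H_\infty)$, then running the contradiction for curves not contained in the integrable neighbourhood), whereas you fold both into a single contradiction argument and invoke energy quantisation to rule out nodal $u_k$ limiting to the smooth $\ell_0$; this is equivalent to the openness of $\mM(p_0,H_\infty)$ in $\overline\mM(p_0,H_\infty)$.
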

\begin{proof}
 Choose a point $q_0 \in W_\infty \setminus H_\infty$ such that $q_0 \ne p_0$. The unique 
line $\ell_0$ in $W_\infty$ passing through $p_0$ and $q_0$ also intersects $H_\infty$, and 
therefore determines an element of $\mM(p_0, H_\infty)$. Moreover, any
sphere of $\mM(p_0, H_\infty)$ passing through $q_0$ intersects $W_\infty$ in three 
points, and therefore must be contained in it, so it is equal to $\ell_0$.

Since none of the nodal spheres passes through $q_0$, and since $\mM_z^{\op{red}}(p_0, H_\infty)$ 
is compact, there is a neighbourhood $U$ of $q_0$ in $\overline{W}$ such that 
$\eval^{-1}(U) \subset \mM_z(p_0, H_\infty)$. 

After possibly reducing the size of $U$, we can assume that every $J$-holomorphic sphere of 
$\mM(p_0, H_\infty)$ passing through $U$ is contained in the neighbourhood of 
$W_\infty$ on which $J$ is integrable. Suppose on the contrary that there is a sequence $[u_n]$ 
of elements of  $\mM(p_0, H_\infty)$ and a sequence of points $q_n \in \overline{W}$ 
converging to $q_0$ such that the image of $u_n$ contains $q_n$, but is not contained in 
some fixed neighbourhood of $W_\infty$. Then by Gromov compactness there is a 
subsequence of $[u_n]$ converging to a (possibly nodal) $J$-holomorphic sphere of 
$\overline\mM(p_0, H_\infty)$  passing through $q_0$ and not contained in the fixed 
neighbourhood of $W_\infty$. This is a contradiction because the only element of 
$\overline\mM(p_0, H_\infty)$ passing through $q_0$ is $\ell_0$, which is contained in 
$W_\infty$.
\end{proof}

\begin{lemma}\label{evaluation has degree one}
  The evaluation map
  $\eval \colon \overline\mM_z(p_0, H_\infty) \to
  \overline{W}$ has degree one.
\end{lemma}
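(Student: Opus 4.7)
The plan is to compute $\deg \eval$ by counting signed preimages of a regular value chosen in the open set $U$ from Lemma~\ref{U exists}. By that lemma, every sphere of $\overline\mM(p_0, H_\infty)$ meeting $U$ belongs to the smooth stratum $\mM(p_0, H_\infty)$ and has image inside the integrable neighbourhood of $W_\infty$, which we identify with (an open subset of) the total space of $\oO_{\PP^{n-1}}(2)$. The whole preimage count therefore reduces to a purely holomorphic problem in this linear model.

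In this linear model any $J$-holomorphic sphere of $\mM(p_0, H_\infty)$ is the graph of a holomorphic section $s$ of $\oO_\ell(2)$ over some projective line $\ell \subset \C\PP^{n-1}$: the homological constraint forces the projection to $\C\PP^{n-1}$ to be a degree-one holomorphic map, hence a biholomorphism onto a line. I then fix a regular value $q$ of $\eval$ in $U$, taken close enough to the distinguished point $q_0 \in W_\infty \setminus H_\infty$ of Lemma~\ref{U exists} that $q \notin H_\infty$ and that the projection $\overline q \in \C\PP^{n-1}$ of $q$ differs from $p_0$. Then the line $\ell$ is uniquely determined as the line through $p_0$ and $\overline q$; the constraint $s(p_0)=0$ together with the condition that the sphere meet $H_\infty$, which forces the second zero of $s$ to lie at $\ell \cap H_\infty$, determine $s$ up to a nonzero complex scalar; and the requirement $s(\overline q)=q$ (read in the fibre of $\oO_{\PP^{n-1}}(2)$ over $\overline q$) pins down the scalar. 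The marked point of $\mM_z$ landing on $q$ is equally unique, since $s$ is a biholomorphism onto its graph. Consequently $\eval^{-1}(q)$ consists of a single point.

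Near this preimage the moduli space sits as a complex submanifold of a space of parametrised $J$-holomorphic maps, and $\eval$ is a local holomorphic map between complex manifolds of the same complex dimension $n$, hence orientation-preserving for the complex orientations. The main technical point, and the one requiring the most care, is to verify that the orientation on $\overline\mM_z(p_0, H_\infty)$ produced by the determinant line of the Cauchy--Riemann operator agrees with the complex orientation in this integrable region; this is standard, the key fact being that the linearised operator is complex linear there. Granting this, the local contribution at the preimage equals $+1$, and therefore $\deg \eval = 1$.
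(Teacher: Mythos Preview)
Your proof is correct and follows essentially the same route as the paper: restrict to the integrable neighbourhood via Lemma~\ref{U exists}, identify spheres with sections of $\oO_{\PP^1}(2)$ over the unique line through $p_0$ and $\overline q$, and use that the one-dimensional space of sections vanishing at $p_0$ and $\ell\cap H_\infty$ is pinned down by the value at $\overline q$. Your treatment of the sign via holomorphicity of $\eval$ in the integrable region is in fact more explicit than the paper's, which simply notes $\#\eval^{-1}(q)=1$ on $U$ and invokes Sard.
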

\begin{proof}
  Let $U$ be the neighbourhood defined in Lemma~\ref{U exists}.  We
  will show that $\# \eval^{-1}(q)=1$ for every $q \in U$.

  Since all $J$-holomorphic 
spheres passing through $U$ are contained in the neighbourhood where $J$ is integrable, we 
can pretend we are working in the total space of $\oO_{\PP^{n-1}}(2)$. Given 
$q \in U$, let $\overline{q}$ be its projection to $\C\PP^{n-1}\cong W_\infty$. Any 
$J$-holomorphic sphere of $\mM(p_0, H_\infty)$ passing through $q$ projects to the 
unique line $\ell_q$ in $W_\infty$ passing through $p_0$ and $\overline{q}$.
The sphere itself  corresponds then 
to a section of $\oO_{\PP^{n-1}}(2)|_{\ell_q} \cong \oO_{\PP^1}(2)$ which 
vanishes at $p_0$ and at $p_\infty = \ell_q \cap H_\infty$. The space of sections of 
$\oO_{\PP^1}(2)$  vanishing at $p_0$ and $p_\infty$ has complex dimension one, 
and thus there is a unique such section for any point~$q$ in the fibre of 
$\oO_{\PP^1}(2)$ over $\overline{q}$.

This shows that $\# \eval^{-1}(q)=1$ for every $q \in U$, and since
$U$ is open, by Sard's theorem it contains a regular value of the
evaluation map.  This proves that $\eval$ has degree one.
\end{proof} 

It is important to have a degree one map because such maps induce
surjections in homology.  More generally, we have the following lemma.

\begin{lemma}\label{why we like it}
  Let $f \colon X \to Y$ be a smooth map between closed oriented
  smooth manifolds of the same dimension.  Assume that $f$ has degree
  $d$, and let $S \subset Y$ be a compact, oriented $k$-dimensional
  submanifold that is transverse to $f$.

  Then it follows that $S' := f^{-1}(S)$ has an induced orientation
  and, with that orientation, we have the equality
  $$
  f_*\bigl([S']\bigr) =  d\, [S]
  $$
  in $H_k(Y; \Z)$.
\end{lemma}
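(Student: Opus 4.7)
My plan is to reduce the claim to the identity $\deg(g) = d$ for the restriction $g := f|_{S'} \colon S' \to S$, where $S' := f^{-1}(S)$ is endowed with the orientation induced by the transverse intersection. Once that identity is established, the defining property of the degree of a map between closed oriented $k$-manifolds gives $g_*[S'] = d\,[S]$ in $H_k(S;\Z)$, and pushing forward via the inclusion $\iota \colon S \hookrightarrow Y$ yields $f_*[S'] = \iota_*\bigl(g_*[S']\bigr) = d\,[S]$ in $H_k(Y;\Z)$, as required.

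First I would verify that $S'$ is a closed oriented $k$-dimensional submanifold of $X$. Compactness is immediate: $S$ is compact and hence closed in $Y$, so $S'$ is closed in the compact manifold $X$. Transversality of $f$ with $S$ makes $S'$ a smooth submanifold of the expected dimension $k$, whose normal bundle in $X$ is canonically identified, via $df$, with the pullback of the normal bundle of $S$ in $Y$. The orientations of $X$, $Y$, and $S$ orient the normal bundle of $S$ in $Y$; pulling this orientation back via $df$ and combining it with the orientation of $X$ specifies the induced orientation on $S'$.

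Second I would compute $\deg(g) = d$. By Sard's theorem applied to $g$, regular values of $g$ are dense in $S$, so I may pick such a regular value $y_0 \in S$. For each $x \in g^{-1}(y_0) = f^{-1}(y_0)$, the map $df_x|_{T_xS'} \colon T_xS' \to T_{y_0}S$ is an isomorphism by the regularity of $g$ at $y_0$, while transversality makes $df_x$ surject onto the normal quotient $T_{y_0}Y/T_{y_0}S$; together these force $df_x \colon T_xX \to T_{y_0}Y$ to be an isomorphism, so that $y_0$ is also a regular value of $f$. Moreover, the orientation convention on $S'$ is designed precisely so that the local sign of $g$ at $x$ coincides with the local sign of $f$ at $x$. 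Summing over $x \in f^{-1}(y_0)$ therefore gives $\deg(g) = \deg(f) = d$.

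The only delicate point in this argument is the sign bookkeeping in the final step, but this is simply the standard compatibility between the orientation convention for transverse preimages and the local degree of a map, so there is no substantive obstacle.
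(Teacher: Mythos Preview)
Your proposal is correct and follows essentially the same argument as the paper: both prove that the restriction $g = f|_{S'}$ has degree $d$ by showing that a regular value of $g$ is also a regular value of $f$ with matching local signs, and then push forward via the inclusion $S \hookrightarrow Y$. The paper writes $f_S$ for your $g$ and organises the sign check via the same normal-bundle identification, so the two proofs are interchangeable.
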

\begin{proof}
A submanifold $S$ is transverse to a map $f$ if, for every $y \in S$ and $x \in f^{-1}(y)$ 
we have $T_yS \oplus d_xf(T_xX)=T_yY$. This property implies that 
\begin{itemize}
\item $S'= f^{-1}(S)$ is a compact submanifold of $X$, and
\item $df$ defines an isomorphism between the normal bundle of 
$S'$ and the normal bundle of $S$. 
\end{itemize}
The orientations of $S$ and $Y$ determine an orientation
of the normal bundle of $S$.  This in turn induces an orientation of the normal bundle 
of $S'$ via $df$ which, combined with the orientation of $X$ induces the orientation
of $S'$.

Let $f_S \colon S' \to S$ be the restriction of $f$. The condition on the normal bundles implies 
that the regular values of $f_S$ are also regular values of $f$. If $y$ is a regular value of 
$f_S$, then
\begin{align*}
  \deg(f_S)= & \sum \limits_{x \in f_S^{-1}(y)} \sign(d_xf_S) \\
  \deg(f)= & \sum \limits_{x \in f^{-1}(y)} \sign(d_xf) .
\end{align*} 

Since $f_S^{-1}(y)=f^{-1}(y)$ by the definition of $f_S$ and
$\sign(d_xf_S) = \sign(d_xf)$ because $df$ is an orientation
preserving isomorphism between the normal bundles, we obtain
$\deg(f_S) = \deg(f) = d$.

Now we consider the commutative diagram
$$\xymatrix{
H_k(S'; \Z) \ar[d] \ar[r]^{(f_S)_*} & H_k(S; \Z) \ar[d] \\
H_k(X; \Z) \ar[r]^{f_*} & H_k(Y; \Z)
}$$ 
where the vertical arrows are induced by the inclusions. The fundamental class of $S'$ is 
mapped  by $(f_S)_*$ to $\deg (f_S)$ times the fundamental class of $S$. The homology classes
 $[S']$ and $[S]$ are the images of the fundamental classes of $S'$ and $S$ in $H_k(X; \Z)$ 
and $H_k(Y; \Z)$ respectively, and therefore $f_*[S']= \deg(f_S)\, [S]=  d\, [S]$.
\end{proof}

\subsection{Decomposition of the line}

The following lemma is a warm up which illustrates how to derive topological implications 
from  Lemma~\ref{evaluation has degree one}. 

\begin{lemma}\label{noncompact}
The moduli space $\mM_z(p_0, H_\infty)$ is not compact.
\end{lemma}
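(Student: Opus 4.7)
The plan is to argue by contradiction.  Suppose that $\mM_z(p_0, H_\infty)$ is compact.  The subset $\mM_z(p_0, H_\infty) \subseteq \overline\mM_z(p_0, H_\infty)$ is non-empty (it contains the pointed lines of $W_\infty$ through $p_0$) and open by construction of the Gromov compactification; compactness makes it also closed, so from the connectedness of $\overline\mM_z(p_0, H_\infty)$ inherited from our restriction to a single connected component we obtain
$$\mM_z(p_0, H_\infty) = \overline\mM_z(p_0, H_\infty).$$
In particular, Lemma~\ref{evaluation has degree one} says that
$$\eval \colon \mM_z(p_0, H_\infty) \to \overline{W}$$
is a proper, smooth, degree-one map between closed oriented $2n$-manifolds, hence surjective.

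I then pick a sequence $q_n \in \overline{W} \setminus \{p_0\}$ with $q_n \to p_0$, and use surjectivity to choose $([u_n], z_n) \in \mM_z(p_0, H_\infty)$ with $u_n(z_n) = q_n$.  By the assumed compactness a subsequence converges to some $([u_\infty], z_\infty) \in \mM_z(p_0, H_\infty)$, and continuity of $\eval$ gives $u_\infty(z_\infty) = p_0$.

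Now by Lemma~\ref{why things work}, $[u_\infty]$ either is a line contained in $W_\infty$ or meets $W_\infty$ transversally in exactly two points; in either case $u_\infty^{-1}(p_0)$ consists of a single point, namely the constraint marked point associated to $p_0$ in the definition of $\mM(p_0, H_\infty)$.  Thus $z_\infty$ coincides with this constraint marked point, whereas by definition the free marked point of an element of $\mM_z(p_0, H_\infty)$ must be distinct from the two constraint marked points.  This contradicts $([u_\infty], z_\infty) \in \mM_z(p_0, H_\infty)$, and so $\mM_z(p_0, H_\infty)$ cannot be compact.

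The only delicate point I expect in turning this sketch into a full proof is the bookkeeping of marked points in the Gromov compactification: the collision of the free marked point with the constraint marked point mapping to $p_0$ corresponds in $\overline\mM_z(p_0, H_\infty)$ to a nodal stable map with a (possibly constant) bubble carrying both colliding marked points, and so lies in $\mM^{\op{red}}_z(p_0, H_\infty)$ rather than in $\mM_z(p_0, H_\infty)$ itself — which is of course exactly why the argument gives non-compactness rather than an actual geometric absurdity.
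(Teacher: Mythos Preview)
Your argument does not prove the lemma as the paper intends it, because it hinges on a convention about marked points that the paper does not adopt.  In this article $\mM_z(p_0, H_\infty)$ is explicitly treated as an $S^2$-bundle over $\mM(p_0, H_\infty)$ (see the opening sentence of the paper's proof), with $\eval^{-1}(p_0)$ a genuine \emph{section}; in other words, ghost bubbles arising when the free marked point collides with a constraint marked point are tacitly contracted (cf.\ the footnote accompanying the decomposition of $\mM_z^{(i)}(p_0,H_\infty)$).  With that convention, your limit $([u_\infty], z_\infty)$ with $u_\infty(z_\infty)=p_0$ is a perfectly good element of $\mM_z(p_0, H_\infty)$ --- it lies on the section $\eval^{-1}(p_0)$ --- and no contradiction arises.

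If instead you insist on the strict convention that the free marked point be distinct from the constraint points, then $\mM_z(p_0, H_\infty)$ is a bundle with fibre $S^2$ minus two points and is \emph{tautologically} noncompact, independent of any holomorphic curve theory; your argument then just rediscovers this trivial fibrewise noncompactness.  The point of the lemma, and the way it is used immediately afterwards (``Lemma~\ref{noncompact} tells us thus that $\mM^{\op{red}}(p_0, H_\infty)$ is nonempty''), is that the \emph{base} $\mM(p_0,H_\infty)$ is noncompact, i.e.\ that genuine two-component nodal curves exist.  Your sequence $q_n\to p_0$ can never detect that: the curves $u_n$ themselves need not degenerate at all.

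The paper's proof is homological rather than pointwise: it uses that $\mM_z(p_0,H_\infty)\setminus \eval^{-1}(H_\infty)$ retracts onto the section $\eval^{-1}(p_0)$, so $\eval_*$ is zero on $H_2$ of this complement; combined with the degree-one property (Lemma~\ref{why we like it}), this forces $[\ell]=0$, a contradiction.  That argument genuinely uses global information about the bundle structure and cannot be short-circuited by looking near a single point.
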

\begin{proof}
The moduli space $\mM_z(p_0, H_\infty)$ is an $S^2$-bundle over  $\mM(p_0,
 H_\infty)$ with two distinguished sections $\eval^{-1}(p_0)$ and $\eval^{-1}(H_\infty)$. 
Then $\mM_z(p_0, H_\infty) \setminus \eval^{-1}(H_\infty)$ retracts onto
$\eval^{-1}(p_0)$. This implies that 
$$\eval_* \colon H_k\bigl(\mM_z(p_0, H_\infty) \setminus \eval^{-1}(H_\infty); \Z\bigr) 
\to H_k(\overline{W}; \Z)$$
is trivial whenever $k >0$.

Let $\ell \subset \overline{W}$ be an embedded sphere  which is
homologous to a line in $W_\infty$ but disjoint from $H_\infty$.  It
is possible to find such a sphere because $H_\infty$ has
codimension~$4$ in $\overline{W}$, but, in general, $\ell$ will not be
holomorphic.  We perturb $\ell$ to be transverse to the evaluation map
and we denote $\eval^{-1}(\ell)$ by $\ell'$.  If
$\mM_z(p_0, H_\infty)$ is compact, $\eval_*([\ell'])=[\ell]$
by Lemma~\ref{why we like it}.  Since
$\ell \cap H_\infty = \emptyset$, it follows that $\ell'$ does
  not intersect $\eval^{-1}(H_\infty)$.  The previous paragraph
implies then that $[\ell] = \eval_*([\ell']) = 0$.  This is a
contradiction because $\ell$ is homologous to a symplectic sphere, and
therefore is nontrivial in homology.
\end{proof}

Lemma~\ref{noncompact} tells us thus that
$\mM^{\op{red}}(p_0, H_\infty)$ is nonempty.  We decompose it into
connected components
$$\mM^{\op{red}}(p_0, H_\infty)= \mM^{(1)}(p_0, H_\infty) \sqcup \dotsm \sqcup
\mM^{(N)}(p_0, H_\infty)$$
and, correspondingly, we decompose the moduli space with a free marked point into 
connected components
$$\mM^{\op{red}}_z(p_0, H_\infty)= \mM^{(1)}_z(p_0, H_\infty) \sqcup \dotsm \sqcup
\mM^{(N)}_z(p_0, H_\infty).$$ Each $\mM^{(i)}_z(p_0, H_\infty)$ is an
$S^2 \vee S^2$-bundle over $\mM^{(i)}(p_0, H_\infty)$ with three
distinguished sections: one, denoted ${\mathcal S}_0^{(i)}$, where the
free marked point is mapped to $p_0$, one, denoted
${\mathcal S}_\infty^{(i)}$, where the free marked point is mapped to
$H_\infty$, and one, denoted ${\mathcal S}_{\mathfrak{n}}^{(i)}$,
where the free marked point lies on the node.\footnote{Strictly
  speaking ghost bubbles appear in these three cases and we tacitly
  contract them. We ignore this technical complication as it has no
  topological consequence.}  Therefore we can see each
$\mM^{(i)}_z(p_0, H_\infty)$ as the union of two sphere bundles
${\mathcal N}_0^{(i)}$ and ${\mathcal N}_\infty^{(i)}$ over
$\mM^{(i)}(p_0, H_\infty)$ glued together along the section
${\mathcal S}_{\mathfrak{n}}^{(i)}$. An element of
$\mM_z^{(i)}(p_0, H_\infty)$ belongs to ${\mathcal N}_0^{(i)}$ when
the free marked point lies in the domain of the irreducible component
passing through $p_0$, and to ${\mathcal N}_\infty^{(i)}$ when the
free marked point lies in the domain of the irreducible component
passing through $H_\infty$.  We denote the homology classes
  representing the fibres of ${\mathcal N}^{(i)}_0$ and of
  ${\mathcal N}^{(i)}_\infty$ by $A_0^{(i)}$ and by $A_\infty^{(i)}$
  respectively.

\medskip

Our aim is to show that there is a nodal curve in the compactification
of $\mM(p_0, H_\infty)$ that is composed of two holomorphic spheres
representing homology classes which are equal up to torsion.

  A nodal curve in $\mM^{(i)}(p_0, H_\infty)$ is composed of two
  holomorphic spheres that are fibres of
  ${\mathcal N}_*^{(i)}$ for $* = 0$ or $* = \infty$ so that
  $\eval_* \bigl(A_0^{(i)}\bigr) + \eval_* \bigl( A_\infty^{(i)}\bigr)
  = [\ell]$ in $H_2(\overline{W}; \Z)$.

  The pull-back of the symplectic form $\overline{\omega}$ is
  cohomologically nontrivial on the fibres of ${\mathcal N}_*^{(i)}$
  for any $i = 1, \ldots , N$ and $* \in \{0, \infty \}$.  Therefore,
  by the Leray-Hirsch Theorem (see \cite[Theorem 5.11]{Bott-Tu} for
  its cohomological form),
  \begin{equation}\label{Leray-Hirsch}
    H_2({\mathcal N}_*^{(i)}; \Z)
    \cong H_2\bigl({\mathcal S}_*^{(i)}; \Z\bigr) \oplus H_2(S^2; \Z) \cong
    H_2\bigl({\mathcal S}_{\mathfrak{n}}^{(i)}; \Z\bigr) \oplus H_2(S^2; \Z),
  \end{equation}
  where the summand $H_2(S^2; \Z)$ is generated by a fibre of
  ${\mathcal N}^{(i)}_*$.

 In the next two lemmas we use the Leray-Hirsch Theorem to gain homological information
   on the evaluation maps, and on the components of the nodal curves.  Given homology classes~$A$ and $B$ of
 complementary degrees (in the same manifold), we denote by $A \cdot B$ their
 intersection product.  If $A$ and $B$ are  represented by closed, oriented submanifolds
 which intersect transversely, $A \cdot B$ is the algebraic count of intersection points.

  \begin{lemma}\label{Sn is trivial}
    The map
    \begin{equation*}
      (\eval|_{{\mathcal S}_\mathfrak{n}^{(i)}})_* \colon 
      H_2\bigl({\mathcal S}_\mathfrak{n}^{(i)}; \Z\bigr) \to
      H_2(\overline{W}; \Z)
    \end{equation*}
    is trivial for every $i=1, \dotsc, N$.
  \end{lemma}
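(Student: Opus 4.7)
The plan is to build an explicit null-homotopy of $\eval \circ \mathcal{S}_\mathfrak{n}^{(i)} \colon \mM^{(i)}(p_0, H_\infty) \to \overline{W}$, from which triviality of the induced map on $H_2$ is immediate.

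The key observation is that, thanks to the gauge fixing introduced in Section~2, the sphere bundle $\mathcal{N}_0^{(i)} \to \mM^{(i)}(p_0, H_\infty)$ is canonically a product. Indeed, the three conditions $u^0(0) = p_0$, $u^0(1) \in \widetilde{W}_\infty \setminus H_\infty$, and $u^0(\infty) = u^\infty(0)$ completely pin down the parametrisation of the component $u^0 \colon \C\PP^1 \to \overline{W}$, and the $S^1$-action used to pass from $\widetilde{\mM}^{\op{red}}(p_0, H_\infty)$ to $\mM^{\op{red}}(p_0, H_\infty)$ acts only on $u^\infty$, so it preserves this parametrisation. Hence $\mathcal{N}_0^{(i)} \cong \mM^{(i)}(p_0, H_\infty) \times \C\PP^1$, where the $\C\PP^1$-factor is the canonical domain of $u^0$; under this identification $\mathcal{S}_0^{(i)}$ is the constant section $\mM^{(i)}(p_0, H_\infty) \times \{0\}$ and $\mathcal{S}_\mathfrak{n}^{(i)}$ is the constant section $\mM^{(i)}(p_0, H_\infty) \times \{\infty\}$.

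Any continuous path $\gamma \colon [0,1] \to \C\PP^1$ with $\gamma(0) = 0$ and $\gamma(1) = \infty$ then defines an isotopy of sections $H_t(m) = (m, \gamma(t))$ from $\mathcal{S}_0^{(i)}$ to $\mathcal{S}_\mathfrak{n}^{(i)}$. Post-composing with $\eval$ produces a homotopy in $\overline{W}$ between $\eval \circ \mathcal{S}_0^{(i)}$, which is the constant map $m \mapsto u^0_m(0) = p_0$, and $\eval \circ \mathcal{S}_\mathfrak{n}^{(i)}$, which sends each nodal curve to its node. Since homotopic maps induce the same map on singular homology and a constant map is the zero map on $H_k$ for $k > 0$, we conclude that $(\eval|_{\mathcal{S}_\mathfrak{n}^{(i)}})_* = 0$ on $H_2$. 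The only point requiring real care is verifying the product structure of $\mathcal{N}_0^{(i)}$; once that is secured, the rest is a routine application of the homotopy invariance of singular homology.
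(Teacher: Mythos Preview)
Your proof is correct and takes a genuinely different, more geometric route than the paper's.  You exploit the gauge-fixing of Section~2: the three conditions $u^0(0)=p_0$, $u^0(1)\in\widetilde W_\infty\setminus H_\infty$, $u^0(\infty)=u^\infty(0)$ rigidify the parametrisation of $u^0$, and since the residual $S^1$-action touches only $u^\infty$, the bundle $\mathcal N_0^{(i)}$ is canonically a product $\mM^{(i)}(p_0,H_\infty)\times\C\PP^1$.  Sliding the marked point from $\infty$ to $0$ then gives an explicit homotopy from $\eval|_{\mathcal S_\mathfrak n^{(i)}}$ to the constant map $p_0$, which is a stronger conclusion than the statement (null-homotopy rather than mere $H_2$-triviality).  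The paper instead argues purely homologically and does not use the trivialisation: via the Leray-Hirsch decomposition of $H_2(\mathcal N_0^{(i)};\Z)$, any class $c$ coming from $\mathcal S_\mathfrak n^{(i)}$ is written as a class in $H_2(\mathcal S_0^{(i)};\Z)$ plus $k\,A_0^{(i)}$; the first piece evaluates to zero because $\mathcal S_0^{(i)}$ collapses to $p_0$, and then $k$ is killed by intersecting $\eval_*(c)$ with $[W_\infty]$ (the node avoids $W_\infty$ by Lemma~\ref{bubbles are nice}, while $\eval_*(A_0^{(i)})\cdot[W_\infty]=1$).  Your approach is more elementary and avoids Leray-Hirsch, at the price of depending on the particular gauge-fixing; the paper's argument would survive even if $\mathcal N_0^{(i)}$ were a nontrivial sphere bundle, since it only needs the Leray-Hirsch splitting and positivity of intersection with $W_\infty$.
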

  \begin{proof}
    By Equation~\eqref{Leray-Hirsch} every class
    $c \in H_2({\mathcal S}_\mathfrak{n}^{(i)}; \Z)$ can be written as
    the sum of a class in $H_2({\mathcal S}^{(i)}_0; \Z)$ and a
    multiple of the class of the fibre. Since ${\mathcal S}_0^{(i)}$
    is mapped to $p_0$, we obtain
    $(\eval|_{{\mathcal S}_0^{(i)}})_*=0$, and thus
    $\eval_*(c)= k\, \eval_*\bigl(A^{(i)}_0\bigr)$.  By
    Lemma~\ref{bubbles are nice}
    $\eval\bigl({\mathcal S}^{(i)}_{\mathfrak{n}}\big) \cap W_\infty =
    \emptyset$ while
    $\eval_*\bigl(A_0^{(i)}\bigr) \cdot [W_\infty] =1$ so that
    \begin{equation*}
      0= \eval_*(c) \cdot [W_\infty]
      = k \, \eval_*\bigl(A_0^{(i)}\bigr) \cdot [W_\infty] =k  \;. \qedhere 
    \end{equation*}
  \end{proof}

 Let $\eval_\infty^{(i)} \colon {\mathcal S}_\infty^{(i)} \to H_{\infty}$ be the
    restriction of $\eval \colon \overline{\mathcal M}_z(p_0, H_\infty) \to \overline{W}$.

    \begin{lemma}\label{ultimo}
      If $\deg(\eval_\infty^{(i)}) \ne 0$, then $[\ell]= 2 \eval_*\bigl([A_\infty^{(i)}]\bigr)$ modulo
      torsion.
    \end{lemma}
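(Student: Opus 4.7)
The plan is to exhibit an explicit $2$-cycle inside the section $\mathcal S_\infty^{(i)}$ whose image under $\eval$ is a nonzero multiple of $[\ell]$, then to rewrite that image in terms of $\eval_*(A_\infty^{(i)})$ by comparing $\mathcal S_\infty^{(i)}$ with the node-section $\mathcal S_\mathfrak n^{(i)}$ inside the sphere bundle $\mathcal N_\infty^{(i)} \to \mM^{(i)}(p_0, H_\infty)$, and finally to pin down the numerical coefficient by pairing with $[W_\infty]$.

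For the first step I would pick a complex line $L \subset H_\infty$ and perturb it slightly inside $H_\infty$ so that $\eval_\infty^{(i)}$ becomes transverse to $L$.  By Lemma~\ref{why we like it}, the preimage $S' := (\eval_\infty^{(i)})^{-1}(L) \subset \mathcal S_\infty^{(i)}$ is a compact oriented surface with $(\eval_\infty^{(i)})_*[S'] = d\,[L]$ in $H_2(H_\infty; \Z)$, where $d = \deg(\eval_\infty^{(i)}) \ne 0$ by hypothesis.  Since $L$ and $\ell$ are both lines in $W_\infty \cong \C\PP^{n-1}$, they have the same class in $H_2(\overline W; \Z)$, so regarding $[S']$ as a class in $H_2(\mM_z^{(i)}(p_0, H_\infty); \Z)$ via the section inclusion gives $\eval_*[S'] = d\,[\ell]$.

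For the second step I would compare the two sections $\mathcal S_\mathfrak n^{(i)}$ and $\mathcal S_\infty^{(i)}$ of the sphere bundle $\pi \colon \mathcal N_\infty^{(i)} \to \mM^{(i)}(p_0, H_\infty)$.  Both sections are diffeomorphic to the base via $\pi$; let $b \in H_2(\mM^{(i)}(p_0, H_\infty); \Z)$ correspond to $[S'] \in H_2(\mathcal S_\infty^{(i)}; \Z)$, and let $[S''] \in H_2(\mathcal S_\mathfrak n^{(i)}; \Z)$ be the class corresponding to the same $b$.  Viewing both $[S']$ and $[S'']$ in $H_2(\mathcal N_\infty^{(i)}; \Z)$ via the two section inclusions, we have $\pi_*[S'] = \pi_*[S''] = b$, so the difference $[S'] - [S'']$ lies in $\ker \pi_* = \Z \cdot A_\infty^{(i)}$, i.e.\ $[S'] = [S''] + k\,A_\infty^{(i)}$ for some $k \in \Z$.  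Pushing forward by $\eval$ and using Lemma~\ref{Sn is trivial} to kill $\eval_*[S'']$, I obtain $d\,[\ell] = k\,\eval_*(A_\infty^{(i)})$ in $H_2(\overline W; \Z)$.

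Finally I would fix $k$ by pairing both sides with $[W_\infty]$.  Lemma~\ref{bubbles are nice} ensures that a bubble in class $A_\infty^{(i)}$ meets $W_\infty$ transversely at a single point, so $\eval_*(A_\infty^{(i)}) \cdot [W_\infty] = 1$, while $[\ell] \cdot [W_\infty] = \langle c_1(\oO_{\PP^{n-1}}(2)), [\ell] \rangle = 2$.  This forces $k = 2d$, and hence $d\bigl([\ell] - 2\,\eval_*(A_\infty^{(i)})\bigr) = 0$; since $d \ne 0$, the difference $[\ell] - 2\,\eval_*(A_\infty^{(i)})$ is torsion.  The only technical point is the section-switching argument in Step~2, but it is elementary once one observes that the fiber is $S^2$ and that both $\mathcal S_\infty^{(i)}$ and $\mathcal S_\mathfrak n^{(i)}$ are genuine sections of~$\pi$.
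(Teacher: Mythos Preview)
Your argument is correct and is essentially the paper's own proof, lightly rephrased.  The paper also pulls back a line $\ell \subset H_\infty$ by $\eval_\infty^{(i)}$ to get a surface $\ell'_i \subset \mathcal S_\infty^{(i)}$ with $\eval_*[\ell'_i] = \kappa_i\,[\ell]$, decomposes $[\ell'_i] = d\,A_\infty^{(i)} + c$ with $c \in H_2(\mathcal S_\mathfrak n^{(i)};\Z)$ via the Leray--Hirsch splitting~\eqref{Leray-Hirsch}, kills $\eval_*(c)$ using Lemma~\ref{Sn is trivial}, and reads off $d = 2\kappa_i$ by intersecting with $[W_\infty]$.  Your ``section-switching'' step, where you replace $[S']$ by $[S''] + k\,A_\infty^{(i)}$, is exactly that Leray--Hirsch decomposition; the assertion $\ker\pi_* = \Z\cdot A_\infty^{(i)}$ is not quite as automatic as your last sentence suggests (one needs the fibre class to be non-torsion, which the paper secures by noting that $\overline\omega$ is nontrivial on fibres), but with that caveat the two proofs coincide.
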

\begin{proof}
    Let $\ell$ be now a line that lies in $H_\infty$ and perturb it
  (inside $H_\infty$) to make it transverse to
  $\eval_\infty^{(i)}\colon {\mathcal S}^{(i)}_\infty \to H_\infty$.
  Then $\ell'_i := (\eval_\infty^{(i)})^{-1}(\ell)$ is a smooth
  submanifold of ${\mathcal S}^{(i)}_\infty$ which, by Lemma~\ref{why
    we like it}, satisfies
  $\bigl(\eval_\infty^{(i)}\bigr)_*([\ell'_i]) = \kappa_i\, [\ell]$
  with $\kappa_i:= \deg \bigl(\eval_\infty^{(i)})$.  Using that
  $\eval$ and $\eval_\infty^{(i)}$ commute with the corresponding
  inclusions, we also obtain $\eval_*([\ell'_i]) = \kappa_i\, [\ell]$.

  According to Equation~\eqref{Leray-Hirsch}, we can represent
  $[\ell_i']$ as
  \begin{equation*}
    [\ell_i'] = d\, A_\infty^{(i)} + c
  \end{equation*}
  for some $d \in \Z$ and
  $c \in H_2({\mathcal S}_{\mathfrak{n}}^{(i)}; \Z)$.  This combined
  with Lemma~\ref{Sn is trivial} shows that
  $d\, \eval_*\bigl(A_\infty^{(i)}\bigr) = \kappa_i\, [\ell]$, and by
  intersecting with $W_\infty$ we obtain $d = 2 \kappa_i$ so that
  \begin{equation*}
    \kappa_i \, \bigl(2\,\eval_*\bigl(A_\infty^{(i)}\bigr) - [\ell]\bigr) = 0 \;.
    \qedhere
  \end{equation*}
\end{proof}

The next step is to show that $\deg(\eval_\infty^{(i)}) \ne 0$ for at
least one $i \in \{1, \ldots, N \}$.  This will be the goal of the
next lemmas.

\begin{lemma}\label{remove unnecessary intersection points}
  Let $X$ be a compact oriented $n$-dimensional manifold containing
  two closed oriented submanifolds~$S$ and $Y$.  Suppose that
  $\dim S + \dim Y = n$, that $\dim Y \ge 2$, and that $S$ and $Y$
  intersect transversely.
  
  Then there is an oriented submanifold~$S'$ that is homologous to $S$
  and that intersects $Y$ transversely in exactly $\bigl|[S]\cdot [Y]\bigr|$
  many points.

  More precisely, we can choose an arbitrarily small neighbourhood of
  $Y$ such that $S$ and $S'$ agree outside this neighbourhood.
  Furthermore, given a compact subset $Y' \subset X$ that is disjoint
  from $S$, and that intersects $Y$ in a codimension~$2$ submanifold,
  we can additionally assume that $S'$ is also disjoint from $Y'$.
\end{lemma}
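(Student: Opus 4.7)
The plan is to iteratively cancel pairs of intersection points of opposite sign by a Whitney-style surgery on $S$ along embedded paths in $Y$. Since $S$ is compact and meets $Y$ transversely, $S \cap Y$ is a finite set of signed points whose algebraic count equals $[S]\cdot [Y]$. If the geometric count strictly exceeds $\bigl|[S]\cdot [Y]\bigr|$, then some connected component of $Y$ must contain two intersection points $p$ and $q$ with opposite local signs; I will describe a local modification of $S$ near an arc from $p$ to $q$ in $Y$ that removes both intersections, does not introduce new ones, and leaves $S$ in the same homology class. Iterating finitely many times then realises the lower bound $\bigl|[S]\cdot [Y]\bigr|$.

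First I would pick an embedded arc $\gamma \subset Y$ from $p$ to $q$. Since $\gamma$ is one-dimensional, since the remaining points of $S \cap Y$ are zero-dimensional, and since $Y' \cap Y$ has codimension two in $Y$, the hypothesis $\dim Y \ge 2$ allows, by general position inside $Y$, a choice of $\gamma$ with $\gamma \cap S = \{p,q\}$ and $\gamma \cap Y' = \emptyset$. Then I would take a tubular neighbourhood $\nu(\gamma) \subset X$ of $\gamma$ small enough to be disjoint from $Y'$, from $(S\cap Y) \setminus \{p,q\}$, and from the other components of $Y$. In suitable local coordinates one may identify $\nu(\gamma) \cong \gamma \times D^{\dim Y - 1} \times D^{\dim S}$, with $Y \cap \nu(\gamma) = \gamma \times D^{\dim Y - 1} \times \{0\}$ and $S \cap \nu(\gamma) = D_p \sqcup D_q$, where $D_p = \{p\}\times\{0\}\times D^{\dim S}$ and $D_q = \{q\}\times\{0\}\times D^{\dim S}$.

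The cancellation cylinder is then $T := \gamma \times \{0\} \times \partial D^{\dim S}$, a properly embedded $(\dim S)$-dimensional submanifold of $\nu(\gamma) \setminus Y$ with $\partial T = \partial D_p \sqcup \partial D_q$; the fact that $p$ and $q$ have opposite local signs is precisely what makes the boundary orientations of $D_p$ and $D_q$ compatible with a global orientation on $T$. Replacing $D_p \cup D_q$ inside $S$ by $T$ (and smoothing along the common boundary) produces an oriented submanifold $S''$ that coincides with $S$ outside $\nu(\gamma)$, is disjoint from $Y$ inside $\nu(\gamma)$, remains disjoint from $Y'$, and meets $Y$ in exactly two fewer points than $S$. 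Since $\nu(\gamma)$ is diffeomorphic to a ball, the closed cycle $D_p \sqcup (-D_q) \sqcup (-T)$ bounds a compact chain inside it, so $[S''] = [S]$ in $H_\ast(X;\Z)$. Iterating this procedure yields the desired $S'$. The main obstacle is the clean construction of the arc $\gamma$, and this is exactly where the hypothesis $\dim Y \ge 2$ is essential: in dimension one the arc in $Y$ from $p$ to $q$ may be forced to pass through other intersection points or through $Y'$, whereas in dimension at least two general position removes both obstructions.
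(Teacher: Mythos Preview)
Your argument is correct and follows essentially the same route as the paper: both proofs cancel a pair of oppositely signed intersection points by choosing an embedded arc $\gamma$ in $Y$ between them (avoiding the remaining points of $S\cap Y$ and $Y'\cap Y$, which is where $\dim Y\ge 2$ enters), and then performing an index~$1$ surgery on $S$ that replaces the two normal disks over the endpoints by the tube $\gamma\times\partial D^{\dim S}$ inside a tubular neighbourhood. The only cosmetic difference is that the paper phrases the construction using the normal disk bundle of $Y$ restricted to $\gamma$ and records the homology directly as $[S']-[S]=[\partial(D^k\times[0,1])]$, whereas you trivialise a tubular neighbourhood of $\gamma$ in $X$ and argue that the difference cycle bounds in a ball; these are the same picture.
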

\begin{proof}
  We obtain $S'$ by attaching certain $1$-handles to $S$.
  
  If the number of intersection points of $S$ and $Y$ does not agree
  with $\bigl|[S]\cdot [Y]\bigr|$, then there needs to be a pair of intersection
  points $\{x_-,x_+\} \subset S \cap Y$ of opposite sign.  Choose an
  embedded path~$\gamma$ in $Y$ with end points~$x_-$ and $x_+$ that
  avoids any other intersection point in $S \cap Y$ and also
  $Y'\cap Y$, if such a $Y'$ has been chosen.

  Identify a tubular neighbourhood of $Y$ with the normal bundle of
  $Y$, and assume that $S$ corresponds in this neighbourhood to the
  fibres of the normal bundle over the points in $S \cap Y$.  Note
  that the normal bundle is naturally oriented by the orientations of
  $Y$ and $X$.
  
  The restriction of the disk bundle over $\gamma$ is a solid cylinder
  $D^k \times [0,1]$ such that $D^k \times \{0,1\}$ is a neighbourhood
  of $\{x_-,x_+\}$ in $S$.  The solid cylinder is naturally oriented,
  and the orientation of $S$ at $\{x_-,x_+\}$ is equal to the boundary
  orientation of $D^k \times [0,1]$.

  Remove $D^k \times \{0,1\}$ from $S$, and glue instead the tube
  $(\partial D^k) \times [0,1]$ along the boundary of the two holes
  that we have created in $S$ (abstractly this corresponds to performing
  an index~$1$ surgery).
  This yields, after smoothing, an
  oriented closed manifold~$S'$ that agrees outside the chosen tubular
  neighbourhood of $Y$ with $S$.  We can do this construction also
  avoiding $Y'$ if necessary.  Note that
  $S' \cap Y = (S \cap Y) \setminus\{x_-, x_+\}$, and that $S$ and
  $S'$ are homologous, because
  $[S'] - [S] = \bigl[\partial (D^k \times [0,1])\bigr]$.

  By repeating this construction as often as necessary, we can cancel
  all pairs of intersection points of opposite sign until all points
  in $S' \cap Y$ have the same sign.  This then implies as desired
  that $\#\bigl(S \cap Y\bigr) = \bigl|[S]\cdot [Y]\bigr|$.
\end{proof}

\begin{lemma}\label{intersection line nodal curve is not trivial}
  Let $\ell$ be a surface in $\overline{W}$ that is transverse to the
  evaluation map~$\eval$.  Denote the oriented
  submanifold~$\eval^{-1}(\ell)$ in $\overline{\mM}_z(p_0, H_\infty)$
  by $\ell'$.

  If the intersection product
  $[\ell'] \cdot [{\mathcal N}_\infty^{(i)}]$ is trivial for all
  $i = 1, \dotsc, N$, then it follows that $\ell$ is null-homologous.
\end{lemma}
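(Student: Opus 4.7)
The starting point is that, by Lemma~\ref{evaluation has degree one} together with Lemma~\ref{why we like it}, we have $\eval_*[\ell'] = [\ell]$ in $H_2(\overline{W}; \Z)$.  Hence it suffices to find a cycle $\ell''$ homologous to $\ell'$ in $\overline\mM_z(p_0, H_\infty)$ for which $\eval_*[\ell''] = 0$.  The plan is to arrange $\ell''$ to lie in a subspace of $\overline\mM_z(p_0, H_\infty)$ on which $\eval$ is null-homotopic.

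First, since $H_\infty$ has real codimension $4$ in the $2n$-dimensional manifold $\overline{W}$, I will replace $\ell$ by a homologous surface, still transverse to $\eval$, that is disjoint from $H_\infty$; then $\ell' = \eval^{-1}(\ell)$ is automatically disjoint from $\eval^{-1}(H_\infty)$.  Next I apply Lemma~\ref{remove unnecessary intersection points} iteratively for $i = 1, \dotsc, N$ with $S$ equal to the current surface (homologous to $\ell'$), $Y = {\mathcal N}_\infty^{(i)}$, and $Y'$ equal to the union of $\eval^{-1}(H_\infty)$ with the remaining ${\mathcal N}_\infty^{(j)}$'s.  The different ${\mathcal N}_\infty^{(j)}$'s lie in distinct connected components of $\mM_z^{\op{red}}(p_0, H_\infty)$ and are pairwise disjoint, so $Y' \cap Y$ reduces to $\eval^{-1}(H_\infty) \cap {\mathcal N}_\infty^{(i)} = {\mathcal S}_\infty^{(i)}$, a codimension-$2$ submanifold of $Y$.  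Since $[\ell'] \cdot [{\mathcal N}_\infty^{(i)}] = 0$ by hypothesis, the lemma allows me to cancel all intersection points with ${\mathcal N}_\infty^{(i)}$ without creating new ones with $Y'$.  After $N$ iterations, I obtain a surface $\ell''$ homologous to $\ell'$ and disjoint from $\eval^{-1}(H_\infty) \cup \bigcup_i {\mathcal N}_\infty^{(i)}$.

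Now let $X := \overline\mM_z(p_0, H_\infty) \setminus \bigl(\eval^{-1}(H_\infty) \cup \bigcup_i {\mathcal N}_\infty^{(i)}\bigr)$; I claim $X$ deformation retracts onto $\eval^{-1}(p_0)$, which $\eval$ collapses to the single point $p_0$.  Over the main stratum, $X \cap \mM_z(p_0, H_\infty)$ is the $S^2$-bundle $\mM_z(p_0, H_\infty) \to \mM(p_0, H_\infty)$ with the section $\eval^{-1}(H_\infty) \cap \mM_z$ removed; it retracts fiberwise onto the section $\eval^{-1}(p_0) \cap \mM_z$.  Over each nodal stratum, $X \cap \mM^{(i)}_z(p_0, H_\infty) = {\mathcal N}_0^{(i)} \setminus {\mathcal S}_{\mathfrak{n}}^{(i)}$, an $S^2$-bundle minus a section, which retracts fiberwise onto ${\mathcal S}_0^{(i)}$.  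These sections together are precisely $\eval^{-1}(p_0)$, so once the retractions have been patched into a single deformation retraction of $X$, we conclude $\eval_*[\ell''] = 0$, hence $[\ell] = \eval_*[\ell'] = \eval_*[\ell''] = 0$.

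The main obstacle is to verify that the stratum-by-stratum fiberwise retractions can be glued together continuously across the transitions between the main stratum and the nodal strata.  This uses the explicit gluing description of $\overline\mM_z(p_0, H_\infty)$: the region of the domain of a preglued (and thus glued) curve that corresponds to the $\infty$-bubble shrinks to a small disc around the node as the gluing parameter degenerates, so that removing $\eval^{-1}(H_\infty)$ in the main stratum matches up, near the nodal stratum, with removing ${\mathcal N}_\infty^{(i)}$, making the fiberwise retractions compatible.
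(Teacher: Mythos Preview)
Your argument follows essentially the same route as the paper's: push $\ell'$ off the $\mathcal{N}_\infty^{(i)}$'s and $\eval^{-1}(H_\infty)$ using Lemma~\ref{remove unnecessary intersection points}, then use that the complement retracts onto $\eval^{-1}(p_0)$.

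There is one small slip in your invocation of Lemma~\ref{remove unnecessary intersection points}. You take $Y'$ to be the union of $\eval^{-1}(H_\infty)$ with the remaining $\mathcal{N}_\infty^{(j)}$'s, but that lemma requires $Y'$ to be \emph{disjoint from $S$}, and at step~$i$ the current surface may still meet $\mathcal{N}_\infty^{(j)}$ for $j>i$, so the hypothesis fails. The paper avoids this by taking only $Y' = \eval^{-1}(H_\infty)$ (which \emph{is} disjoint from the running surface at every step) and instead appealing to the clause that the modification occurs in an arbitrarily small neighbourhood of $Y = \mathcal{N}_\infty^{(i)}$; since the $\mathcal{N}_\infty^{(j)}$'s are pairwise disjoint, this already guarantees that no new intersections with the other components are created. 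With that adjustment your proof is correct and matches the paper's. (You should also perturb $\ell'$ to be transverse to each $\mathcal{N}_\infty^{(i)}$ before applying the lemma, as the paper does.)

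Your discussion of the retraction of $X$ onto $\eval^{-1}(p_0)$ actually goes further than the paper, which simply asserts it by analogy with Lemma~\ref{noncompact}; your stratum-by-stratum description and the remark that the gluing model makes the fibrewise retractions compatible near the nodal locus is a reasonable justification of that step.
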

\begin{proof}
  Generically, $\ell$ is disjoint from $H_\infty$, so we may assume
  that $\ell'$ does not intersect $\eval^{-1}(H_\infty)$, and after a
  further perturbation we can assume that $\ell'$ is transverse to
  ${\mathcal N}_\infty^{(i)}$ without changing the homology class of
  $\ell'$.

  If $[\ell'] \cdot [{\mathcal N}_\infty^{(i)}] = 0$, we can apply
  Lemma~\ref{remove unnecessary intersection points} to find a
  surface~$\ell''$ in $\overline{\mM}_z(p_0, H_\infty)$ that is
  homologous to $\ell'$ and that does not have any intersection points
  either with ${\mathcal N}_\infty^{(i)}$ or with
  $\eval^{-1}(H_\infty)$.  Furthermore, since this modification has
  been performed in an arbitrarily small neighbourhood of
  ${\mathcal N}_\infty^{(i)}$, we may assume that we have not created
  any new intersection points with one of the other components
  ${\mathcal N}_\infty^{(j)}$ for $j\ne i$.

  Thus, if the intersection product
  $[\ell'] \cdot [{\mathcal N}_\infty^{(i)}]$ is trivial for all
  $i = 1, \dotsc, N$, we obtain by successively applying this
  construction for each $i$ a surface $\ell''$ in
  $\overline{\mM}_z(p_0, H_\infty)$ with $[\ell''] = [\ell']$ that
  does not intersect any of the ${\mathcal N}_\infty^{(i)}$ or
  $\eval^{-1}(H_\infty)$.

  We then have that $\eval_*([\ell''])=0$ as in the proof of
  Lemma~\ref{noncompact}, because
  $\overline{\mM}_z(p_0, H_\infty) \setminus \bigl( \bigcup_i
  {\mathcal N}_\infty^{(i)} \cup \eval^{-1}(H_\infty)\bigr)$ retracts
  to $\eval^{-1}(p_0)$, but due to Lemma~\ref{why we like it} we see
  that $\eval_*([\ell']) = [\ell]$.  Since $[\ell''] = [\ell']$, it
  follows that $[\ell] = 0$.
\end{proof}

\begin{lemma}\label{degree and intersection}
  Let $\ell\subset \overline{W}$ be a surface that is transverse to
  the evaluation map~$\eval$ and that represents the homology class of
  a line in $W_\infty$.  Then it follows for
  $\ell' = \eval^{-1}(\ell)$ that
  $$  [\ell'] \cdot \bigl[{\mathcal N}_\infty^{(i)}\bigr]
  = \deg(\eval_\infty^{(i)}) \;. $$
\end{lemma}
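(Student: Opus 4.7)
The plan is to evaluate the intersection by choosing a particularly simple representative of the line class for which the preimage under $\eval$ can be described by hand. I would take $\ell \subset W_\infty \cong \C\PP^{n-1}$ to be a generic $J$-holomorphic projective line that does not pass through $p_0$; since $J$ is integrable on a neighbourhood of $W_\infty$, this is automatically a $J$-holomorphic sphere in $\overline{W}$, lies in the line class, and meets $H_\infty$ transversely in the single point $p_\infty := \ell \cap H_\infty$. By a further small perturbation within this family we may in addition arrange that $p_\infty$ is a regular value of $\eval_\infty^{(i)} \colon {\mathcal S}_\infty^{(i)} \to H_\infty$.

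The key observation is that this choice forces $\ell' \cap {\mathcal N}_\infty^{(i)} \subset {\mathcal S}_\infty^{(i)}$. Indeed, if $(C,z) \in \ell' \cap {\mathcal N}_\infty^{(i)}$, then $z$ lies in the domain of the $\infty$-component of the nodal curve $C$ and $u_C(z) \in \ell \subset W_\infty$. By Lemma~\ref{bubbles are nice} the $\infty$-component meets $W_\infty$ transversely in a single point of $H_\infty$, so $u_C(z)$ has to equal this unique intersection point, which in turn must lie in $\ell \cap H_\infty = \{p_\infty\}$. Thus $(C,z)$ is the point of ${\mathcal S}_\infty^{(i)}$ lying above $C$ and $\eval_\infty^{(i)}(C) = p_\infty$; conversely every $C \in (\eval_\infty^{(i)})^{-1}(p_\infty)$ yields such an intersection. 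The signed count of preimages at the regular value $p_\infty$ equals $\deg(\eval_\infty^{(i)})$ by definition.

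It remains to verify transversality of $\ell$ with $\eval$ at each $(C, z_C^*) \in \ell' \cap {\mathcal N}_\infty^{(i)}$. The image of $d\eval$ on $T{\mathcal N}_\infty^{(i)}$ contains $T_{p_\infty} H_\infty$ (through the surjective $d\eval_\infty^{(i)}$ at a regular value) together with the complex direction along which the $\infty$-component leaves $H_\infty$; by the transverse intersection of the $\infty$-component with $W_\infty$ provided by Lemma~\ref{bubbles are nice}, this latter direction has a non-zero component normal to $W_\infty$ in $\overline{W}$. The tangent line $T_{p_\infty}\ell$, on the other hand, is contained in $TW_\infty$ but transverse to $H_\infty$ inside $W_\infty$. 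These three pieces together span $T_{p_\infty}\overline{W}$, which gives transversality.

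The main obstacle will be the orientation bookkeeping: one has to verify that the intersection sign at each $(C, z_C^*) \in \ell' \cap {\mathcal N}_\infty^{(i)}$ computed from the orientations on $\ell'$, ${\mathcal N}_\infty^{(i)}$ and $\overline{\mM}_z(p_0, H_\infty)$ matches the sign attached to $C$ in the definition of $\deg \eval_\infty^{(i)}$. Because $\ell$, the normal direction along the $\infty$-component and the relevant marked-point coordinates are all complex-linear at $p_\infty$, all local orientation comparisons reduce to orientations of complex vector spaces and agree on the nose, yielding $[\ell'] \cdot [{\mathcal N}_\infty^{(i)}] = \deg(\eval_\infty^{(i)})$.
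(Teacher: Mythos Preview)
Your approach is essentially the same as the paper's: choose a line $\ell_0 \subset W_\infty$ meeting $H_\infty$ transversely in a single point $p_\infty$ (or $y$ in the paper's notation) chosen to be a regular value of $\eval_\infty^{(i)}$, identify $\eval^{-1}(\ell_0)\cap\mathcal{N}_\infty^{(i)}$ with $(\eval_\infty^{(i)})^{-1}(p_\infty)$, and check that the transversality and signs match using the complex-linear structure in the fibre direction.

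The one step you gloss over, and which the paper makes explicit, is that your line $\ell$ sitting inside $W_\infty$ is \emph{not} globally transverse to $\eval$: you only verify transversality at the points of $\ell'\cap\mathcal{N}_\infty^{(i)}$, but $\eval^{-1}(\ell)$ also contains many other points coming from the smooth stratum $\mM_z(p_0,H_\infty)$, and there is no reason for transversality to hold there. Without global transversality, $\ell' = \eval^{-1}(\ell)$ is not a priori a smooth closed surface, so $[\ell']$ and the intersection product $[\ell']\cdot[\mathcal{N}_\infty^{(i)}]$ are not yet defined. The paper fixes this by taking a small perturbation of $\ell_0$ to a surface $\ell$ that \emph{is} transverse to $\eval$, with the perturbation supported away from $p_\infty$; since $\ell_0$ was already transverse to $\eval|_{\mathcal{N}_\infty^{(i)}}$ near $p_\infty$, no new intersections with $\mathcal{N}_\infty^{(i)}$ are created, and the local count you carried out survives unchanged. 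With this small addition your argument is complete and coincides with the paper's.
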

\begin{proof}
  Let $y \in H_\infty$ be a regular value of $\eval_\infty^{(i)}$ for
  all $i=1, \dotsc, N$, and let $\ell_0$ be a line in $W_\infty$
  intersecting $H_\infty$ transversely at $y$.  It follows that
  $\ell_0$ is transverse to $\eval|_{\mathcal{N}_\infty^{(i)}}$ at
  $y$, that is, for every
  $x \in \eval^{-1}(y) \cap {\mathcal N}_\infty^{(i)}$ we have
  \begin{equation}\label{transversality with N}
    T_y \ell_0 \oplus d_x\eval\bigl(T_x{\mathcal N}_\infty^{(i)}\bigr)
    = T_y \overline{W} \;,
  \end{equation}
  because the nodal $J$-holomorphic spheres in
  $\overline\mM(p_0, H_\infty)$ are all transverse to
  $W_\infty$.

  By construction
  $\bigl(\eval_\infty^{(i)}\bigr)^{-1}(y)= \bigl(\eval|_{{\mathcal
    N}_\infty^{(i)}}\bigr)^{-1} (\ell_0)$.  If
  $x \in \bigl(\eval|_{{\mathcal N}_\infty^{(i)}}\bigr)^{-1}(\ell_0)$, we define
  $\sign(x) = +1$ if the equality of Equation~\eqref{transversality
    with N} preserves the orientation, and $\sign(x)=-1$
  otherwise. Then $\sign(x)= \sign(d_x \eval_\infty^{(i)})$ because
  $d_x \eval$ is complex linear in the extra direction
  $T_x{\mathcal N}_\infty^{(i)}/T_x{\mathcal S}_\infty^{(i)}$.

  Now let $\ell$ be a small perturbation of $\ell_0$ which is
  transverse to $\eval$.  By Equation \eqref{transversality with N} we can assume 
that the perturbation is supported away from $y$ and that no new intersection points 
between $\eval^{-1}(\ell)$ and ${\mathcal N}_\infty^{(i)}$ are created. Then
  \begin{equation*}
      [{\mathcal N}_\infty^{(i)}] \cdot [\eval^{-1}(\ell)]
      =  \!\!\!\!\! \sum_{x \in \bigl(\eval|_{{\mathcal N}_\infty^{(i)}}\bigr)^{-1}(\ell_0)} \!\!\!\!\!\!\!\!\!\! \sign(x)
      = \!\!\!\!\! \sum \limits_{x \in \bigl(\eval^{(i)}_\infty \bigr )^{-1}(y)} \!\!\!\!\! \!\!\!\!\! \sign (d_x \eval^{(i)}_\infty) = 
\deg(\eval_\infty^{(i)}) \;.  \qedhere
  \end{equation*}
\end{proof}

\begin{lemma}\label{ultimissimo}
  There exists an $i \in \{1, \dotsc, N \}$ such that
  $\deg(\eval_\infty^{(i)}) \ne 0$.
\end{lemma}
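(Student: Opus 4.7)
My plan is to argue by contradiction, assuming that $\deg(\eval_\infty^{(i)}) = 0$ for every $i \in \{1, \dotsc, N\}$, and deriving a contradiction with the fact that the homology class of a line in $W_\infty$ is non-trivial in $\overline{W}$.

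First I would pick $\ell$ to be a line in $W_\infty \cong \C\PP^{n-1}$, perturbed in $\overline{W}$ by an arbitrarily small amount (so as not to change its homology class) to be transverse to the evaluation map~$\eval$. Since $\ell$ still represents the homology class of a line, Lemma~\ref{degree and intersection} applies and gives
$$
[\ell'] \cdot \bigl[{\mathcal N}_\infty^{(i)}\bigr]
= \deg(\eval_\infty^{(i)}) = 0
$$
for every $i = 1, \dotsc, N$, where $\ell' = \eval^{-1}(\ell)$. But then Lemma~\ref{intersection line nodal curve is not trivial} forces $[\ell] = 0$ in $H_2(\overline{W}; \Z)$.

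This is a contradiction: $\ell$ is a symplectic sphere in $(\overline{W}, \overline{\omega})$, so $\langle [\overline{\omega}], [\ell] \rangle > 0$, and therefore $[\ell] \ne 0$. Hence at least one of the degrees $\deg(\eval_\infty^{(i)})$ must be non-zero. The argument is essentially immediate once the preceding lemmas are in hand; the only point requiring care is that the perturbation of $\ell$ can simultaneously be chosen small enough to preserve its homology class and generic enough to make it transverse to $\eval$, but this is standard transversality and is already the setting in which Lemmas~\ref{degree and intersection} and \ref{intersection line nodal curve is not trivial} are stated.
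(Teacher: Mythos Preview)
Your proof is correct and essentially identical to the paper's own argument: both combine Lemma~\ref{degree and intersection} with Lemma~\ref{intersection line nodal curve is not trivial} to conclude that if all degrees vanished then $[\ell]=0$, contradicting positivity of the symplectic area. The only cosmetic difference is that the paper simply takes any surface in the class of a line transverse to $\eval$, while you start from an actual line and perturb it.
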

\begin{proof}
  Let $\ell$ be a surface in $\overline{W}$ that is transverse to the
  evaluation map~$\eval$ and that represents the homology class of a
  line in $W_\infty$.  Denote $\eval^{-1}(\ell)$ by
  $\ell'$.  By Lemma~\ref{degree and intersection},
  $\deg(\eval_\infty^{(i)}) = [\ell'] \cdot \bigl[{\mathcal
    N}_\infty^{(i)}\bigr]$.  Thus, if $\deg(\eval_\infty^{(i)})$ were
  $0$ for every $i=1, \dotsc, N$, it would follow from
  Lemma~\ref{intersection line nodal curve is not trivial} that
  $[\ell]=0$.
  But this is impossible because the symplectic form
  evaluates positively on $[\ell]$.
\end{proof}

After all this preparation, the proof of Theorem~\ref{nonfillability} is falling at our feet like a
ripe fruit.

\begin{proof}[Proof of Theorem~\ref{nonfillability}]
  By Lemma~\ref{ultimissimo} there exists an $i \in \{1, \dotsc, N\}$
  such that $\deg(\eval_\infty^{(i)}) \ne 0$.  Then, by
  Lemma~\ref{ultimo},
  $[\ell]= 2\, \eval_*\bigl([A_\infty^{(i)}]\bigr)$ modulo torsion.
  If we evaluate the first Chern class of $T\overline{W}$ on $[\ell]$
  we obtain
  $$
  n+2 = \bigl\langle c_1(T \overline{W}), [\ell] \bigr\rangle
  = 2\, \bigl\langle c_1(T \overline{W}),
  \eval_*\bigl([A_\infty^{(i)}]\bigr)\bigr\rangle \;,
  $$
  which is a contradiction when $n$ is odd.
\end{proof}

\section{Fundamental group of semipositive fillings}\label{sec: pi_1}

In this section let $(W, \omega)$ be a semipositive filling of
$(\R\PP^{2n-1}, \xi)$.  We recall that $(W, \omega)$ is semipositive
if every class~$A$ in the image of the Hurewicz homomorphism
$\pi_2(W) \to H_2(W; \Z)$ satisfying the conditions
$\langle \omega, A \rangle >0$ and
$\langle c_1(TW), A \rangle \ge 3-n$ also satisfies
$\langle c_1(TW), A \rangle \ge 0$.  See \cite[Definition 6.4.1]{MS}.

We use the same compactification~$(\overline{W}, \overline{\omega})$
and the same set of almost complex structures ${\mathcal J}$ as in the
previous sections, but now that $(W, \omega)$ does not need to be
symplectically aspherical we cannot assume anymore that
$\overline\mM(p_0, H_\infty)$ is a manifold or that its elements have
no irreducible component contained completely inside
$\overline{W} \setminus W_\infty$.  The irreducible components which
intersect $W_\infty$ must be simply covered because the intersections
are simple, and therefore are Fredholm regular for a generic almost
complex structure $J \in {\mathcal J}$, but the irreducible components
which are contained in $\overline{W} \setminus W_\infty$ can be
multiply covered.  However according to \cite[Theorem 6.6.1]{MS} the
image of $\mM^{\op{red}}(p_0, H_\infty)$ under the evaluation map is
contained in the union of images of finitely many compact codimension
two smooth manifolds for a generic $J \in {\mathcal J}$ because the
irreducible components intersecting $W_\infty$ are Fredholm regular
and the irreducible components contained in
$\overline{W} \setminus W_\infty$ are controlled by semipositivity.
In particular
$\overline{W} \setminus \eval(\mM^{\op{red}}(p_0, H_\infty))$ is open,
dense and connected. Moreover the restriction of the evaluation map
\begin{equation*}
  \eval \colon
  \overline{\mathcal M}(p_0, H_\infty) \setminus
  \eval^{-1}\bigl(\eval(\mM^{\op{red}}(p_0, H_\infty))\bigr)
  \to \overline{W} \setminus \eval\bigl(\mM^{\op{red}}(p_0, H_\infty)\bigr)
\end{equation*}
is proper by Gromov compactness, and therefore its degree is well
defined.  Then Lemma~\ref{evaluation has degree one} can be rephrased
as follows.

\begin{lemma}
  If $(W, \omega)$ is semipositive and
  $y \in \overline{W} \setminus \eval(\mM^{\op{red}}(p_0, H_\infty))$
  is a regular value of $\eval$, then
  $$\sum \limits_{x \in \eval^{-1}(y)} \sign(d_x\eval)=1  .$$
  In particular,
  $\eval \colon \overline\mM_z(p_0, H_\infty) \to \overline{W}$ is
  surjective.
\end{lemma}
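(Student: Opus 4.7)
The plan is to adapt Lemma~\ref{evaluation has degree one} to the semipositive setting, using the open set~$U$ from Lemma~\ref{U exists} as an anchor where the signed count can be computed by hand, and then spreading the count to the rest of the (connected) target via standard degree theory for proper smooth maps.

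First I would observe that Lemma~\ref{U exists} holds without modification in the semipositive setting: its proof invokes only positivity of intersection with $W_\infty$ together with Gromov compactness, neither of which uses symplectic asphericity. So there is still an open set $U \subset \overline{W} \setminus \eval\bigl(\mM^{\op{red}}(p_0, H_\infty)\bigr)$ with $\eval^{-1}(U) \subset \mM_z(p_0, H_\infty)$ such that every sphere of $\mM(p_0, H_\infty)$ passing through a point of $U$ is contained in the integrable neighbourhood of $W_\infty$. The explicit computation inside $\oO_{\PP^{n-1}}(2)$ carried out in Lemma~\ref{evaluation has degree one} then shows that each $q \in U$ has a unique preimage under~$\eval$; because $\eval$ is holomorphic near that preimage, $d_x \eval$ is complex linear and hence orientation preserving. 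Consequently $\sum_{x \in \eval^{-1}(q)} \sign(d_x \eval) = 1$ for every $q \in U$.

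Next I would transpose the properness statement displayed just before the lemma from $\overline\mM(p_0, H_\infty)$ to $\overline\mM_z(p_0, H_\infty)$: Gromov compactness gives that
\[
\eval \colon \overline\mM_z(p_0, H_\infty) \setminus \eval^{-1}\bigl(\eval(\mM^{\op{red}}(p_0, H_\infty))\bigr) \longrightarrow \overline{W} \setminus \eval\bigl(\mM^{\op{red}}(p_0, H_\infty)\bigr)
\]
is proper. Its domain sits in the smooth stratum $\mM_z(p_0, H_\infty)$, which is a $2n$-dimensional oriented manifold by Proposition~\ref{genericity}. Standard degree theory for proper smooth maps between oriented manifolds of the same dimension then shows that the signed count $\sum_{x} \sign(d_x\eval)$ is locally constant on the set of regular values, and constant on each connected component of the target. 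Since the target is connected, as recorded in the paragraph preceding the lemma, and since it meets $U$ where the count equals $1$, the first assertion follows.

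For the surjectivity claim, the image of $\eval \colon \overline\mM_z(p_0, H_\infty) \to \overline{W}$ is closed because $\overline\mM_z(p_0, H_\infty)$ is compact. The first part of the lemma combined with Sard's theorem shows that the image contains a dense subset of $\overline{W} \setminus \eval\bigl(\mM^{\op{red}}(p_0, H_\infty)\bigr)$, and it trivially contains $\eval\bigl(\mM^{\op{red}}(p_0, H_\infty)\bigr)$, so it must be all of $\overline{W}$. The main subtlety I anticipate is justifying that the signed-count formulation still makes sense when $\overline\mM_z(p_0, H_\infty)$ may fail to be a manifold; this is handled precisely by the observation that regular values in $\overline{W} \setminus \eval(\mM^{\op{red}}(p_0, H_\infty))$ have all preimages on the smooth stratum, reducing the argument to ordinary differential-topological degree theory.
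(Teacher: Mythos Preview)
Your proposal is correct and matches the paper's approach: the paper does not give a separate proof, but simply declares the lemma to be a rephrasing of Lemma~\ref{evaluation has degree one} once the preceding paragraph has established that the restriction of $\eval$ over $\overline{W}\setminus\eval(\mM^{\op{red}}(p_0,H_\infty))$ is proper with open, dense, connected target, so that its degree is well defined. Your write-up supplies exactly the details implicit in that sentence --- in particular the observation that Lemma~\ref{U exists} needs no asphericity, and the passage from the displayed properness statement for $\overline\mM(p_0,H_\infty)$ to the one for $\overline\mM_z(p_0,H_\infty)$ --- and the surjectivity argument via compactness of the Gromov compactification plus Sard is the intended one.
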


If we apply the argument of Lemma~\ref{noncompact} to a
$1$-dimensional submanifold of $W$ we obtain the following result.

\begin{lemma}\label{pi1 surjective}
  If $(W, \omega)$ is a semipositive symplectic filling of
  $(\R\PP^{2n-1}, \xi)$, then the inclusion
  $\iota \colon \R\PP^{2n-1} \to W$ induces a surjective map
  $\iota_*\colon \pi_1\bigl(\R\PP^{2n-1}\bigr) \to \pi_1(W)$.
\end{lemma}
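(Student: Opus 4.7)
The strategy is to rerun the argument of Lemma~\ref{noncompact} with a loop $\gamma$ in $W$ in place of a sphere, exploiting the retraction of $\mM_z(p_0, H_\infty) \setminus \eval^{-1}(H_\infty)$ onto the section $\eval^{-1}(p_0)$. First I would perturb $\gamma$ so that it is an embedded circle, transverse to $\eval$, and disjoint from $p_0$, $H_\infty$, and the codimension-two set $\eval(\mM^{\op{red}}(p_0, H_\infty))$; since $\gamma \subset W$ it automatically misses $W_\infty$. Then $\gamma' := \eval^{-1}(\gamma)$ lies in the smooth stratum of $\mM_z(p_0, H_\infty)$, is disjoint from $\eval^{-1}(H_\infty)$, and is a compact $1$-submanifold on which $\eval|_{\gamma'} \colon \gamma' \to \gamma$ has degree one by the preceding lemma.

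Composing the deformation retraction of a neighbourhood of $\gamma'$ in $\mM_z(p_0, H_\infty) \setminus \eval^{-1}(H_\infty)$ onto $\eval^{-1}(p_0)$ with $\eval$ produces a homotopy in $\overline{W}$ from $\eval|_{\gamma'}$ to the constant map $p_0$. The crucial observation is that because $\gamma$ avoids $W_\infty$, no sphere parametrised by $\gamma'$ is entirely contained in $W_\infty$; each such sphere meets $W_\infty$ transversely at $p_0$ and at a single point of $H_\infty$. Hence the homotopy is disjoint from $W_\infty$ for all times $t<1$, and truncating at $t = 1 - \epsilon$ yields, on each connected component $C_j \subset \gamma'$, a free homotopy in $W$ from $\eval|_{C_j}$ to a loop $\eta_j$ sitting in an arbitrarily small punctured neighbourhood $U$ of $p_0$ inside $W$.

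The neighbourhood $U$ deformation retracts onto a meridian circle of $W_\infty$, whose generator is the fibre of $\partial W \to W_\infty$ and therefore maps to $\iota_*(g) \in \pi_1(W)$, where $g$ generates $\pi_1(\partial W) = \Z/2$. Writing $d_j := \deg(\eval|_{C_j})$, it follows that $[\gamma]^{d_j}$ is freely homotopic to $\iota_*(g)^{k_j}$ for some $k_j \in \Z$, so $[\gamma]^{d_j}$ lies in the conjugacy class of an element of $\{1, \iota_*(g)\}$; in particular $[\gamma]^{2d_j} = 1$ in $\pi_1(W)$. Since $\sum_j d_j = 1$, we have $\gcd\{d_j\} = 1$ and consequently $[\gamma]^2 = 1$ for every loop $\gamma$ that can be put in the above generic position, hence for every element of $\pi_1(W)$.

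A classical argument then shows that any group of exponent two is abelian, so $\pi_1(W)$ is abelian; in an abelian group the conjugacy class of $\iota_*(g)^{k_j}$ is the singleton $\{\iota_*(g)^{k_j}\}$, so $[\gamma]^{d_j} \in \{1, \iota_*(g)\}$. Picking $j$ with $d_j$ odd (which exists because the sum is odd) and using $[\gamma]^2 = 1$, we obtain $[\gamma] = [\gamma]^{d_j} \in \iota_*(\pi_1(\R\PP^{2n-1}))$, proving surjectivity. The principal obstacle is the geometric identification of $\eta_j$ with a power of the meridian: it requires using the transversality of the holomorphic spheres with $W_\infty$ at $p_0$ to check that the truncated retraction stays in $W$ and then using the local structure of the normal bundle to express the winding of $\eta_j$ in terms of the Hopf fibre of $\partial W \to W_\infty$.
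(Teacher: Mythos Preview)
Your argument is correct, but it takes a more roundabout route than the paper's. Both proofs begin the same way: make $\gamma$ transverse to $\eval$ and disjoint from the image of the reducible locus, then push the preimage toward $\eval^{-1}(p_0)$ via the fibrewise retraction so as to land in a punctured neighbourhood of $p_0$ inside $W$. The difference is in how the lift is controlled. The paper places the basepoint $b$ in the neighbourhood~$U$ of Lemma~\ref{U exists}, over which $\eval$ is a bijection; then $\eval^{-1}(b)=\{b'\}$ is a single point, so every component of $\eval^{-1}(\gamma)$ other than the one through $b'$ misses $b$ entirely and hence maps to $\gamma$ with degree zero, forcing the remaining component to have degree exactly one. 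This immediately produces a single loop $\Gamma$ with $\eval_*[\Gamma]=[\gamma]$, and surjectivity follows with no further algebra. Your route instead keeps all components $C_j$ with their degrees $d_j$ and extracts the conclusion through the chain ``$[\gamma]^{2d_j}=1$, $\gcd\{d_j\}=1$, hence exponent two, hence abelian, hence pick an odd $d_j$''. This is a valid and self-contained alternative that avoids the external reference to \cite{gnw}, but it leans essentially on $\pi_1(\partial W)\cong\Z/2\Z$; the paper's version uses only the injectivity of $\eval$ over $U$ and therefore carries over verbatim to fillings of $(S^{2n-1},\xi)$ in the final section.
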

\begin{proof}
  Recall that $\overline{W}\setminus W_\infty$ is equal to
  $W\setminus \partial W$.  Instead of proving that
  $\pi_1(\partial W)$ maps surjectively onto $\pi_1(W)$, we can
  equivalently show that for a sufficiently small
  neighbourhood~$U_\epsilon$ of $W_\infty$,
  $\pi_1(U_\epsilon\setminus W_\infty)$ is surjective in
  $\pi_1(\overline{W}\setminus W_\infty)$.

  Choose a base point~$b$ for $\pi_1(\overline{W}\setminus W_\infty)$
  that lies in the neighbourhood~$U$ of Lemma~\ref{evaluation has
    degree one}, and use $b' = \eval^{-1}(b)$ as the base point for
  $\pi_1(\mM_z(p_0, H_\infty))$.
  
  We can represent every element of
  $\pi_1\bigl(\overline{W}\setminus W_\infty\bigr)$ by a smooth
  embedding
  \begin{equation*}
    \gamma \colon S^1 \hookrightarrow W
  \end{equation*}
  that avoids $\eval\bigl(\mM^{\op{red}}(p_0, H_\infty)\bigr)$ by a
  codimension argument and that is transverse to the evaluation map.
  Using the fact that $\eval$ is a diffeomorphism of $U$ onto its
  image and arguing as in point~(i) of the proof of \cite[Lemma
  2.3]{gnw} we obtain a loop
  $\Gamma \colon S^1 \to \mM_z(p_0, H_\infty)$ such that
  $\Gamma(1)= b'$ and $\eval_*([\Gamma]) = [\gamma]$ in
  $\pi_1(\overline{W}\setminus W_\infty)$.  Furthermore $\Gamma$ does
  not intersect any singular stratum or $\eval^{-1}(W_\infty)$.

  We can isotope $\mM_z(p_0, H_\infty) \setminus \eval^{-1}(H_\infty)$
  into an arbitrarily small neighbourhood of $\eval^{-1}(p_0)$ by
  pushing the marked point in every holomorphic sphere from $\infty$
  towards $0$.  This isotopy restricts to
  $\mM_z(p_0, H_\infty) \setminus \eval^{-1}(W_\infty)$, so that
  $\Gamma$ is homotopic in
  $\mM_z(p_0, H_\infty) \setminus \eval^{-1}(W_\infty)$ to a loop in a
  neighbourhood of $\eval^{-1}(p_0)$.

  Then it follows that $[\gamma]$ is homotopic in
  $\overline{W}\setminus W_\infty$ to a loop that lies in an
  arbitrarily small neighbourhood of $p_0$ and
  $\pi_1(U_\epsilon\setminus W_\infty) \to
  \pi_1\bigl(\overline{W}\setminus W_\infty\bigr)$ is surjective.
\end{proof}

Combining this with the argument found in \cite[Section~6.2]{ekp} we
obtain the main result of this section.

\begin{thm}\label{simply connected}
  Any semipositive symplectic filling of $(\R\PP^{2n-1}, \xi)$ is
  simply connected.
\end{thm}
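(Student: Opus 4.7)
The plan is to combine Lemma~\ref{pi1 surjective} with a topological obstruction. Lemma~\ref{pi1 surjective} ensures that the inclusion-induced map $\iota_* \colon \pi_1(\R\PP^{2n-1}) \to \pi_1(W)$ is surjective, so $\pi_1(W)$ is a quotient of $\Z/2\Z$ and is hence either trivial or of order two. To conclude it suffices to exclude the latter possibility.

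Suppose for contradiction that $\pi_1(W) \cong \Z/2\Z$, so that $\iota_*$ is an isomorphism. I would then form the connected double cover $\pi \colon \widetilde{W} \to W$ corresponding to the trivial subgroup of $\pi_1(W)$. Since $\iota_*$ is an isomorphism, the restriction of $\pi$ to the boundary is the unique nontrivial double cover of $\R\PP^{2n-1}$, so $\partial \widetilde{W} = S^{2n-1}$. Pulling back $\omega$, the manifold $\widetilde{W}$ becomes a simply connected, semipositive symplectic filling of the standard $(S^{2n-1}, \xi)$, and carries a free symplectic deck involution $\tau$ that restricts on the boundary to the antipodal map.

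At this point I would invoke the topological argument of \cite[Section~6.2]{ekp}, already used in the introduction to show that $\R\PP^{2n-1}$ with $2n-1 \ge 5$ does not bound any $2n$-manifold with the homotopy type of an $n$-dimensional CW complex. The aim is to combine that obstruction with the existence of the pair $(\widetilde{W}, \tau)$ above, so that the quotient $W = \widetilde{W}/\langle \tau \rangle$ cannot simultaneously be a semipositive filling of $\R\PP^{2n-1}$ and have fundamental group of order two.

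The main obstacle is precisely this last step. The EKP argument in the introduction concerns Weinstein fillings, which have the homotopy type of an $n$-dimensional CW complex; semipositive fillings need not share this restrictive topology. One therefore has to verify that the specific topological input required by the EKP obstruction reduces to the $\pi_1$-information provided by Lemma~\ref{pi1 surjective}, possibly supplemented by the presence of the involution~$\tau$ on the cover $\widetilde{W}$, rather than to the full homotopy type of a Weinstein filling. Everything before this is either a direct consequence of Lemma~\ref{pi1 surjective} or a standard covering-space construction.
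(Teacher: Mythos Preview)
Your proposal has a genuine gap: you correctly reduce the problem to excluding $\pi_1(W)\cong\Z/2\Z$, but you then stop short of an argument, and the detour through the double cover~$\widetilde W$ does not help. You yourself flag the obstacle: you do not know which part of the EKP obstruction survives without the Weinstein hypothesis.

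The point you are missing is that the relevant piece of the argument in \cite[Section~6.2]{ekp} uses \emph{only} the $H^1(\,\cdot\,;\Z/2\Z)$-information already supplied by Lemma~\ref{pi1 surjective}, together with the trivial fact that $[\partial W]=0$ in $H_{2n-1}(W;\Z/2\Z)$. No CW-dimension restriction is needed, and there is no reason to pass to the cover. Concretely: if $\iota_*$ is an isomorphism on $\pi_1$, then $\iota^*\colon H^1(W;\Z/2\Z)\to H^1(\partial W;\Z/2\Z)$ is an isomorphism; let $\alpha$ be the nontrivial class. Since $H^*(\R\PP^{2n-1};\Z/2\Z)$ is generated by its degree-one class, $(\iota^*\alpha)^{2n-1}\ne 0$. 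But $(\iota^*\alpha)^{2n-1}=\iota^*(\alpha^{2n-1})$, and $\iota^*$ vanishes on $H^{2n-1}$ because $\iota_*[\partial W]=0$ in $H_{2n-1}(W;\Z/2\Z)$ and we are over a field. This is exactly the paper's proof; once you see that the obstruction is this cup-power argument, the ``main obstacle'' you describe disappears.
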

\begin{proof}
  Let $(W, \omega)$ be a semipositive symplectic filling of
  $(\R\PP^{2n-1}, \xi)$.  By Lemma~\ref{pi1 surjective} the map
  $\pi_1(\partial W) \to \pi_1(W)$ induced by the inclusion
  $\iota\colon \partial W \hookrightarrow W$ is surjective so that
  $\pi_1(W)$ is either trivial or isomorphic to $\Z/2\Z$.

  In the latter case $\iota$ induces an isomorphism between the
  fundamental groups, and thus
  $$\iota^* \colon H^1(W; \Z / 2 \Z) \to H^1\bigl(\partial W; \Z / 2 \Z\bigr)$$
  is also an isomorphism.  Let $\alpha \in H^1(W; \Z / 2 \Z)$ be the
  nontrivial element.  Then
  $\iota^* \alpha \in H^1\bigl(\partial W; \Z / 2 \Z\bigr)$ is also
  nontrivial and, since $H^*(\R \PP^{2n-1}; \Z / 2 \Z)$ is generated
  as an algebra by the nontrivial element of degree one,
  $(\iota^*\alpha)^{2n-1}$ is the nontrivial element of
  $H^{2n-1}\bigl(\partial W; \Z / 2 \Z\bigr)$.

  By the naturality of the cup product
  $(\iota^*\alpha)^{2n-1}= \iota^*(\alpha^{2n-1})$.  However
  \begin{equation*}
    \iota_*\colon H_{2n-1}\bigl(\partial W; \Z / 2 \Z\bigr) \to
    H_{2n-1}(W; \Z / 2 \Z)
  \end{equation*}
  is trivial, and consequently
  $\iota^*\colon H^{2n-1}(W; \Z / 2 \Z) \to H^{2n-1}\bigl(\partial W;
  \Z / 2 \Z\bigr)$ is also trivial by duality because we are working
  over a field.  This contradicts $\iota^*(\alpha^{2n-1}) \ne 0$ and
  therefore shows that $W$ is simply connected.
\end{proof}

\section{Yet another proof of the Eliashberg-Floer-McDuff 
theorem}

In this section we apply the constructions of this article to the
symplectic fillings of the standard contact structure $\xi$ on
$S^{2n-1}$.  This will lead to small changes in the meaning of the
notation.  If $(W, \omega)$ is a symplectic filling of
$(S^{2n-1}, \xi)$ and we perform symplectic reduction of its boundary,
we obtain a closed symplectic manifold
$(\overline{W}, \overline{\omega})$ with a codimension two symplectic
submanifold $ W_\infty \cong \C\PP^{n-1}$ whose normal bundle is
isomorphic to $\oO_{\PP^{n-1}}(1)$.  We choose an almost complex
structure $J$ on $\overline{W}$ which is integrable near $W_\infty$
and generic elsewhere. Let $p_0 \in W_\infty$ be a point; we denote by
$\mM(p_0)$ the moduli space of unparametrised $J$-holomorphic spheres
in $\overline{W}$ that are homotopic to a line in $W_\infty$ and pass
through $p_0$.  If $\ell$ is a line in $W_\infty$, then
$$T\overline{W}|_\ell \cong \oO_{\PP^1}(2) \oplus 
\underbrace{\oO_{\PP^1}(1) \oplus \dotsm \oplus \oO_{\PP^1}(1)}_{n-1}
.$$ Since $[\ell] \cdot [W_\infty]=1$ all elements of $\mM(p_0)$ are
simply covered, and therefore $\mM(p_0)$ is a smooth manifold by the
analogue of Proposition~\ref{genericity}.  Let $\mM_z(p_0)$ is the
moduli space obtained by adding a free marked point to the elements of
$\mM(p_0)$.  A Riemann-Roch calculation gives $\dim \mM(p_0) = 2n-2$
and $\dim \mM_z(p_0) = 2n$.

\begin{lemma}
  If $(W, \omega)$ is symplectically aspherical, then $\mM_z(p_0)$ is
  compact.
\end{lemma}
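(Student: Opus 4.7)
The plan is to rule out Gromov degenerations of sequences in $\mM_z(p_0)$ by combining the asphericity of $(W, \omega)$ with positivity of intersection against $W_\infty$. First I would apply Gromov compactness to extract a subsequential limit, which is a stable nodal map whose non-ghost components have homology classes summing to $[\ell]$ in $H_2(\overline{W}; \Z)$ and total $\overline{\omega}$-area equal to $\overline{\omega}([\ell])$. The goal is to show that only one non-ghost component can occur and that no ghost tree produces a new stratum.

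The crucial observation, and the only place where asphericity is used, is that no non-constant component of the limit can lie entirely in $\op{int}(W) = \overline{W}\setminus W_\infty$: such a component would be a non-constant $J$-holomorphic sphere in $W$ with strictly positive $\overline{\omega}$-area, contradicting that $\omega$ vanishes on $\pi_2(W)$. Therefore every non-ghost component of the limit meets $W_\infty$ in at least one point.

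Next I would invoke positivity of intersection, valid because $J$ is integrable near $W_\infty$. Each non-ghost component $u_i$ satisfies $[u_i] \cdot [W_\infty] \geq 1$: for components not contained in $W_\infty$ this is the usual positivity-of-intersection statement, while for a component contained in $W_\infty$ the intersection number equals the degree of the normal bundle $\oO_{\PP^{n-1}}(1)$ restricted to that component, which is positive on any non-constant image. Summing and comparing with $[\ell] \cdot [W_\infty] = 1$, there must be a unique non-ghost component $u_1$, and it must represent $[\ell]$ (since ghosts are null-homologous).

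Finally, I would check that no nontrivial ghost bubble tree can be stably attached to $u_1$. Every leaf of a ghost tree has a single node to its parent, so stability requires it to carry at least two additional marked points; but $\mM_z(p_0)$ has only two marked points in total (the constraint at $p_0$ and the free point $z$). The only possibility is a single ghost carrying both marked points, which corresponds to $z$ coinciding on the domain with the preimage of $p_0$ on $u_1$, an honest element of $\mM_z(p_0)$ (under the usual convention realising $\mM_z(p_0)$ as the universal curve over $\mM(p_0)$). The main obstacle here is largely bookkeeping — checking stability of all ghost configurations — while the substantive content is the interaction between asphericity and $[\ell] \cdot [W_\infty] = 1$. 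Once this is done, every Gromov limit is a single smooth sphere in $\mM_z(p_0)$, proving compactness.
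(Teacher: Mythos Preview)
Your proposal is correct and follows the same line as the paper's proof: positivity of intersection against $W_\infty$ combined with $[\ell]\cdot[W_\infty]=1$ forces any nodal degeneration to have an irreducible component contained in $\overline{W}\setminus W_\infty$, which symplectic asphericity forbids. The paper compresses this into a single sentence and does not spell out the ghost-bubble bookkeeping or the case of a component contained in $W_\infty$, so your write-up is simply a more detailed version of the same argument.
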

\begin{proof}
  As the algebraic intersection between a line with $W_\infty$ is one,
  any nodal $J$-holomorphic curve representing the homology class of a
  line must have an irreducible component in
  $\overline{W} \setminus W_\infty \cong W$.
\end{proof}

Lemma~\ref{evaluation has degree one} still holds with the minimal
necessary modifications, and therefore the evaluation map
$\eval \colon \mM_z(p_0) \to \overline{W}$ has degree one.

\begin{lemma}
  If $(W, \omega)$ is a symplectically aspherical filling of
  $(S^{2n-1}, \xi)$, then $H_*(W; \Z)=0$ for $*>0$.
\end{lemma}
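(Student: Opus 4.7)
The plan is to repeat the retraction argument from Lemma~\ref{noncompact}, but now with a single constraint point.  Since a $J$-holomorphic sphere homotopic to a line intersects $W_\infty$ algebraically once, positivity of intersection ensures that every sphere in $\mM(p_0)$ either lies in $W_\infty$ or meets $W_\infty$ transversely at the single point $p_0$.  Denoting by $\mM^{\op{free}}(p_0) \subset \mM(p_0)$ the spheres not contained in $W_\infty$ and by $S_0 := \eval^{-1}(p_0)$ the canonical section of the $S^2$-bundle $\mM_z(p_0) \to \mM(p_0)$, one sees that the preimage $\eval^{-1}(W) = \mM_z(p_0) \setminus \eval^{-1}(W_\infty)$ is precisely the complement of $S_0$ restricted to $\mM^{\op{free}}(p_0)$, and hence is an open $D^2$-bundle over $\mM^{\op{free}}(p_0)$.

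Next I would construct a retraction of this $D^2$-bundle by pushing the free marked point on each sphere toward the unique preimage $C^{-1}(p_0)$.  Concretely, using a holomorphic coordinate $z$ on the fibre centered at $C^{-1}(p_0)$, set $h_t(C,z) = (C, tz)$ for $t \in (0,1]$; because the reparametrization group acts by scalings around $C^{-1}(p_0)$ this descends to a well-defined deformation of $\eval^{-1}(W)$ into itself.  Its image satisfies $\eval(h_t(C,z)) = C(tz) \to p_0$ as $t \to 0$, so for $t$ sufficiently small every cycle in $\eval^{-1}(W)$ is homologous inside $\eval^{-1}(W)$ to one whose $\eval$-image lies in $V \cap W$ for an arbitrarily small neighbourhood $V$ of $p_0$ in $\overline{W}$.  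Since $V \cap W_\infty$ is a codimension-two disk through $p_0$, $V \cap W$ is diffeomorphic to $B^{2n} \setminus B^{2n-2}$, which retracts onto a meridian circle of $W_\infty$.

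It follows that $\eval_* \colon H_k(\eval^{-1}(W); \Z) \to H_k(W; \Z)$ factors through $H_k(S^1; \Z)$, hence vanishes for $k \ge 2$.  For $k = 1$ the generator is a fibre of the Hopf bundle $\partial W = S^{2n-1} \to \C\PP^{n-1}$, which is null-homotopic in $\partial W$ because $\pi_1(S^{2n-1}) = 0$ for $n \ge 2$; so $\eval_*$ vanishes on $H_1$ as well.  To finish, given any class $[\sigma] \in H_k(W; \Z)$ with $k \ge 1$, I would represent it by a smooth (pseudo)cycle transverse to $\eval$ and apply Lemma~\ref{why we like it} together with $\deg(\eval) = 1$ to conclude $[\sigma] = \eval_* \bigl([\eval^{-1}(\sigma)]\bigr) = 0$.

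The main technical obstacle lies in this final step: one needs every integer homology class of $W$ to be representable in a form to which Lemma~\ref{why we like it} applies.  For low-dimensional classes this is classical (Thom), and in general one proceeds via pseudocycle representatives as the authors already do in Section~\ref{sec: pi_1}, verifying that the transverse-intersection computation carries over without change.
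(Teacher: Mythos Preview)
Your approach is essentially correct and reaches the same conclusion, but it diverges from the paper's argument in one key organisational choice, and there is a small technical slip in your justification of~$h_t$.

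The paper does \emph{not} work in $\eval^{-1}(W) = \mM_z(p_0) \setminus \eval^{-1}(W_\infty)$ directly.  Instead it introduces a second $J$-holomorphic hypersurface $\widetilde{W}_\infty$, the graph of a generic holomorphic section of $\oO_{\PP^{n-1}}(1)$, chosen with $p_0 \notin \widetilde{W}_\infty$.  Every sphere in $\mM(p_0)$ meets $\widetilde{W}_\infty$ exactly once, so $\eval^{-1}(\widetilde{W}_\infty)$ is a \emph{second} section of the $S^2$-bundle, disjoint from $S_0 = \eval^{-1}(p_0)$.  Removing that second section yields a disk bundle that retracts cleanly \emph{onto} $S_0$, and since $\eval(S_0)=\{p_0\}$ the induced map on positive-degree homology is zero.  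Because $\overline{W}\setminus \widetilde{W}_\infty$ is diffeomorphic to the interior of $W$, the degree-one surjectivity argument then finishes in one line, with no linking-circle obstruction to discuss.

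Your version removes the first section $S_0$ and pushes toward it without ever reaching it; this forces you to analyse the meridian $S^1$ of $W_\infty$ and invoke $\pi_1(S^{2n-1})=0$ separately for $k=1$.  That works, but it is the extra bookkeeping that the paper's $\widetilde{W}_\infty$ trick is designed to avoid.  The slip is in your claim that ``the reparametrisation group acts by scalings around $C^{-1}(p_0)$'': the M\"obius transformations fixing a single point form the two-dimensional group $z \mapsto az/(1+cz)$, and $z\mapsto tz$ does not commute with these when $c\neq 0$, so $h_t$ as written does not descend to the moduli space.  You can repair this either by choosing a fibrewise metric and flowing radially toward $S_0$, or by producing a second disjoint section to reduce the structure group to~$\C^*$ --- which is precisely what $\widetilde{W}_\infty$ provides.

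Your closing remark about needing pseudocycle representatives for arbitrary integral classes matches the paper's own caveat after invoking Lemma~\ref{why we like it}.
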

\begin{proof}
  The moduli space $\mM_z(p_0)$ is an $S^2$-bundle over $\mM(p_0)$ and
  $\eval^{-1}(p_0)$ is a section.  Let $\widetilde{W}_\infty$ be a
  $J$-holomorphic hypersurface of $\overline{W}$ contained in the
  neighbourhood of $W_\infty$ where $J$ is integrable and obtained as
  the graph of a section of $\oO_{\PP^{n-1}}(1)$.  We choose
  $\widetilde{W}_\infty$ such that
  $p_0 \not \in \widetilde{W}_\infty$: then
  $\eval^{-1}(\widetilde{W}_\infty)$ is a section of $\mM_z(p_0)$
  which is disjoint from $\eval^{-1}(p_0)$.  The map
  \begin{equation}\label{fastidio}
    \eval_* \colon H_*\bigl(\mM_z(p_0)
    \setminus \eval^{-1}(\widetilde{W}_\infty); \Z\bigr) \to
    H_*\bigl(\overline{W} \setminus \widetilde{W}_\infty; \Z\bigr) \cong H_*(W;
    \Z)
  \end{equation}
  is surjective by Lemma~\ref{why we like it}. 
  That lemma, strictly
  speaking, is about homology classes represented by submanifolds, but
  there are several ways to extend it to general homology classes.

  On the other hand
  $\mM_z(p_0) \setminus \eval^{-1}(\widetilde{W}_\infty)$ retracts
  onto $\eval^{-1}(p_0)$, and therefore the map~\eqref{fastidio} is
  trivial for $*>0$.
\end{proof}

The proof of Lemma~\ref{pi1 surjective} works with the obvious
modifications more or less unchanged for fillings of
$(S^{2n-1}, \xi)$, and therefore $W$ is simply connected.  Then the
$h$-cobordism theorem implies the following corollary.

\begin{cor}[Eliashberg--Floer--McDuff]
  If $(W, \omega)$ is a symplectically aspherical filling of
  $(S^{2n-1}, \xi)$, then $W$ is diffeomorphic to the ball $D^{2n}$.
\end{cor}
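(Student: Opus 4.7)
The plan is simply to assemble the preceding lemmas with a classical topological input. The lemma computing $H_*(W; \Z)$ gives $\widetilde H_*(W; \Z) = 0$. The remark just above the corollary states that the proof of Lemma~\ref{pi1 surjective} adapts with obvious modifications, so the inclusion $\iota \colon \partial W = S^{2n-1} \hookrightarrow W$ induces a surjection $\pi_1(S^{2n-1}) \to \pi_1(W)$; since $S^{2n-1}$ is simply connected for $n \ge 2$, so is $W$. Combining the two facts, the Hurewicz and Whitehead theorems show that $W$ is contractible.

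Next, I would turn the contractible filling into an h-cobordism in the standard way. Pick a smoothly embedded closed ball $B \subset \op{int}(W)$ and set $X = W \setminus \op{int}(B)$, a compact manifold with boundary $\partial X = S^{2n-1} \sqcup S^{2n-1}$. Applying Mayer--Vietoris to the decomposition $W = X \cup B$ and using that $W$, $B$, and $\partial B = S^{2n-1}$ all have trivial reduced homology (the last only in degrees below $2n-1$, which is what matters by a dimension count), one concludes that each boundary inclusion $S^{2n-1} \hookrightarrow X$ is a homology equivalence. The van Kampen theorem gives $\pi_1(X) = 1$ as well, so by the Whitehead theorem both boundary inclusions are homotopy equivalences; that is, $X$ is an h-cobordism from $S^{2n-1}$ to $S^{2n-1}$.

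Finally, assuming $2n \ge 6$, Smale's h-cobordism theorem gives a diffeomorphism $X \cong S^{2n-1} \times [0,1]$, and re-gluing $B$ along one boundary component produces a diffeomorphism $W \cong D^{2n}$. The only genuine subtlety is the dimensional restriction: for $n = 2$ the smooth h-cobordism theorem fails, and that case would need to be handled by the separate four-dimensional arguments of the original Eliashberg--McDuff proof. Beyond that caveat, no part of the argument is delicate; all the real work has already been done in controlling the moduli space $\mM_z(p_0)$, establishing compactness in the aspherical setting, and proving that the evaluation map has degree one.
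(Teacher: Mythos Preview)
Your proposal is correct and follows essentially the same route as the paper: the paper simply invokes the preceding homology lemma, the adaptation of Lemma~\ref{pi1 surjective}, and then says ``the $h$-cobordism theorem implies the following corollary,'' whereas you have spelled out the standard intermediate steps (remove a ball, verify the resulting cobordism is an $h$-cobordism via Mayer--Vietoris and Whitehead, apply Smale). Your caveat about $n=2$ is a valid observation that the paper leaves implicit.
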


\bibliographystyle{amsplain}

\providecommand{\bysame}{\leavevmode\hbox to3em{\hrulefill}\thinspace}
\providecommand{\MR}{\relax\ifhmode\unskip\space\fi MR }
\providecommand{\MRhref}[2]{%
  \href{http://www.ams.org/mathscinet-getitem?mr=#1}{#2}
}
\providecommand{\href}[2]{#2}

\end{document}